\newcommand{\cE}{\mathcal{E}}
\newcommand{\bdeta}{\boldsymbol{\eta}}
\newcommand{\bdsigma}{\boldsymbol{\sigma}}
\DeclareMathOperator{\Sym}{Sym}
\DeclareMathOperator{\Tot}{Tot}
\DeclareMathOperator*{\reglim}{reglim}
\newcommand{\locQ}{\underline{\bQ}}
\newcommand{\aUb}{{}_{a\mathstrut}U_{b\mathstrut}}
\newcommand{\aKb}[1]{{}_{a\mathstrut}K_{b\mathstrut}\langle{#1}\rangle}
\newcommand{\aKa}[1]{{}_{a\mathstrut}K_{a\mathstrut}\langle{#1}\rangle}
\newcommand{\aKbp}[1]{{}_{a\mathstrut}K_{b'\mathstrut}\langle{#1}\rangle}
\newcommand{\atKb}[1]{{}_{a\mathstrut}\widetilde{K}_{b\mathstrut}\langle{#1}\rangle}
\newcommand{\aK}[1]{{}_{a\mathstrut}K_{\bullet\mathstrut}\langle{#1}\rangle}
\newcommand{\atK}[1]{{}_{a\mathstrut}\widetilde{K}_{\bullet\mathstrut}\langle{#1}\rangle}
\newcommand{\hgf}[5]{{\mathchoice%
{{}_{#1}F_{#2}\!\left(\genfrac{}{}{0pt}{}{#3}{#4}\,;\,#5\right)}
{{}_{#1}F_{#2}(#3;#4;#5)}
{}{}}}
\begin{document}

\title{
Iterated integrals on the Legendre family of elliptic curves
}
\author{
Eisuke Otsuka
}
\date{}

\address{Eisuke Otsuka\\
Mathematical Institute, Tohoku University.\\
6-3, Aoba, Aramaki, Aoba-ku, Sendai, 980-8578, JAPAN}
\email{eisuke.otsuka.p3@dc.tohoku.ac.jp}

\begin{abstract}

    K.T. Chen showed that iterated integrals give comparison isomorphisms between the cohomologies of bar complexes and fundamental group rings. This led to the development of an algebraic-geometric approach to studying periods given by iterated integrals. In this paper we consider an analogue of this comparison isomorphism theorem for iterated integrals on the Legendre family.
  
\end{abstract}

\maketitle

\tableofcontents

\section{Introduction}
An iterated integral is a multiple integral whose integral region is the $k$-simplex $\Delta_k:=\{(t_1,\dots,t_k)\in(0,1)^k \mid 0<t_1<\cdots<t_k<1\}$. For a piecewise smooth path $\gamma:[0,1]\to M$ and differential forms $\omega_1,\dots,\omega_k$ on a manifold $M$, the iterated integral of $\omega_1,\dots,\omega_k$ along $\gamma$ is defined by

\begin{align}
    \int_\gamma\omega_1\circ\cdots\circ\omega_k:=\int_{\Delta_k}\gamma^\ast\omega_1(t_1)\cdots\gamma^\ast\omega_k(t_k).
\end{align}

It was introduced by K.T. Chen (\cite{Ch73}) to construct the de Rham theory of the loop space of $M$. In the course of his study, he also showed that any iterated integrals can be used to extract non-commutative information for the fundamental group. We will first recall this result. Let $\Omega^\bullet_M$ be the de Rham complex of $M$ and $B^\bullet(\Omega^\bullet_M)$ be the bar complex (see \cref{dfn:bar_complex}). Its cohomology $H^i(B^\bullet(\Omega^\bullet_M))$ has a filtration by length, which is denoted by $L^{-N}H^i(B^\bullet(\Omega^\bullet_M))$. Then, Chen proved the following theorem, called $\pi_1$-de Rham theorem.

\begin{thm}[{\cite[p.92]{Ch75}}] \label{thm:pi1-deRham_Chen_loop}
    For each $N\in\bZge{0}$, the integration of iterated integrals induces a Hopf algebra isomorphism
    \begin{align}
        L^{-N}H^0(B^\bullet(\Omega^\bullet_M))\cong \left(\bQ[\pi_1(M;x)]/J^{N+1}\otimes_\bQ\bC\right)^\vee,
    \end{align}
    where $J\subseteq \bQ[\pi_1(M;x)]$ is the augmentation ideal, which is defined by the kernel of the counit map
    $$\varepsilon:\bQ[\pi_1(M;x)]\to \bQ, \,\sum_ic_i\gamma_i\mapsto\sum_ic_i.$$
    
\end{thm}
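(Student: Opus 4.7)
The strategy is to construct the isomorphism explicitly via integration and then establish bijectivity inductively, comparing the length filtration on the left with the $J$-adic filtration on the right. I first define the map on cochains by
\[
\rho(\omega_1\otimes\cdots\otimes\omega_k)(\gamma) := \int_\gamma \omega_1\circ\cdots\circ\omega_k,
\]
and show that on cocycles $\xi \in B^0(\Omega^\bullet_M)$ the function $\gamma \mapsto \rho(\xi)(\gamma)$ is homotopy-invariant, hence factors through $\pi_1(M;x)$. The key computation is to differentiate $\int_{\gamma_s}\xi$ along a smooth homotopy $\{\gamma_s\}$ of loops and recognize the derivative as a boundary term that vanishes via the cocycle equation of $B^\bullet(\Omega^\bullet_M)$.

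\textbf{Compatibility with the filtrations.} Next, using the path-composition formula
\[
\int_{\gamma\delta}\omega_1\circ\cdots\circ\omega_k = \sum_{i=0}^k\int_\gamma(\omega_1\circ\cdots\circ\omega_i)\cdot\int_\delta(\omega_{i+1}\circ\cdots\circ\omega_k),
\]
I would verify that iterated integrals of length $\leq N$ annihilate $J^{N+1}$, so $\rho$ sends $L^{-N}H^0(B^\bullet(\Omega^\bullet_M))$ into $(\bQ[\pi_1(M;x)]/J^{N+1}\otimes_\bQ\bC)^\vee$. Expanding $\rho(\xi)\bigl((\gamma_1-1)\cdots(\gamma_{N+1}-1)\bigr)$ for $\xi$ of length $k\leq N$ by iterating the composition formula produces a sum of products of iterated integrals with the $k$ one-forms distributed among the $N+1$ factors; since $k<N+1$, at least one factor must have length zero and equals $\int_{\gamma_j-1}1=0$.

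\textbf{Bijectivity.} For bijectivity I would proceed by induction on $N$, applying the five-lemma to the short exact sequences arising from the length filtration on the left and the $J$-adic filtration on the right. The base case $N=0$ reduces to $H^0(M;\bC)\cong\bC\cong(\bQ[\pi_1(M;x)]/J\otimes\bC)^\vee$. For the inductive step, I would identify $L^{-N}H^0/L^{-(N-1)}H^0$ with a subspace of $H^1(M;\bC)^{\otimes N}$ cut out by closedness and coboundary relations via the bar-complex description, and dually identify $(J^N/J^{N+1})^\vee$ through a Magnus--Fox-type expansion of the group ring.

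\textbf{Main obstacle and Hopf structure.} The central difficulty is surjectivity at each graded piece: one must produce enough closed iterated integrals to realize every functional on the finite-dimensional nilpotent quotient $\bQ[\pi_1(M;x)]/J^{N+1}$. Chen resolves this by constructing a formal flat connection valued in the completed free Lie algebra on $H^1(M;\bC)$, whose holonomy is a group-like universal iterated-integral series; truncating modulo length $N+1$ supplies the required functionals. Finally, Hopf compatibility follows from the shuffle product identity for iterated integrals (dual to the group-like coproduct $\gamma\mapsto\gamma\otimes\gamma$ on $\bQ[\pi_1(M;x)]$) together with the path-composition formula above (dual to the multiplication on the group algebra).
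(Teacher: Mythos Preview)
The paper does not prove this theorem at all: it is quoted from Chen \cite{Ch75} as a classical result and serves only as motivation for the paper's main theorem on the Legendre family. There is therefore no proof in the paper to compare your proposal against. Your outline is a reasonable sketch of Chen's original argument (homotopy invariance via the bar differential, annihilation of $J^{N+1}$ by the path-composition formula, and surjectivity from a formal flat connection / transport series), though the surjectivity step is only gestured at and would need Chen's construction of the formal power series connection made precise.

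It is worth noting, however, that when the paper proves its own analogue (Theorem~\ref{thm:pi1-deRham_family}), it does \emph{not} follow Chen's transport-series strategy. Instead it rewrites the bar complex as the total complex of a cosimplicial object $\aUb^\ast$, passes via Riemann--Hilbert and the de Rham theorem to locally constant sheaves, and then invokes a Beilinson-type identification $\mathbb{H}^N(U^N,\aKb{N})\cong(\bQ[\pi_1(U;a,b)]/J^{N+1})^\vee$ proved by induction on $N$ using Leray spectral sequences for the projection $\varpi:U^{N,1}\to U$. Both approaches share the inductive five-lemma skeleton on the filtration, but the paper's surjectivity comes from sheaf-theoretic vanishing ($R^i\varpi_\ast=0$ in low degrees) rather than from constructing an explicit flat connection. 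Your approach is closer to Chen's original and more self-contained; the Beilinson route is what generalizes cleanly to the relative and motivic settings the paper is aiming at.
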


We can also extend this theorem to the set of homotopy classes of continuous maps from $[0,1]$ to $M$, denoted by $\pi_1(M;x,y):=\{\gamma:[0,1]\to M \mid \gamma(0)=x, \gamma(1)=y\}/(\text{homotop})$, since there is an isomorphism $\bQ[\pi_1(M;x,y)]\cong\bQ[\pi_1(M;x)]$ given by the concatenation of a fixed path $\gamma_0\in \pi_1(M;x,y)$. 

\begin{thm}[{\cite[p.92]{Ch75}}] \label{thm:pi1-deRham_Chen_path}
    For each $N\in\bZge{0}$, the integration of iterated integrals induces a Hopf algebra isomorphism
    \begin{align}
        L^{-N}H^0(B^\bullet(\Omega^\bullet_M))\cong \left(\bQ[\pi_1(M;x,y)]/J^{N+1}\bQ[\pi_1(M;x,y)]\otimes_\bQ\bC\right)^\vee,
    \end{align}
    where $J\subseteq \bQ[\pi_1(M;x)]$ is the augmentation.
    
\end{thm}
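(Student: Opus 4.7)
The plan is to reduce \cref{thm:pi1-deRham_Chen_path} to \cref{thm:pi1-deRham_Chen_loop} by transporting everything along the concatenation bijection already mentioned in the paragraph above. Fix a reference path $\gamma_0\in\pi_1(M;x,y)$; then $\alpha\mapsto\alpha\cdot\gamma_0$ defines a bijection $\pi_1(M;x)\to\pi_1(M;x,y)$, whose linear extension I denote by $c_{\gamma_0}:\bQ[\pi_1(M;x)]\xrightarrow{\sim}\bQ[\pi_1(M;x,y)]$.

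First I would observe that $\bQ[\pi_1(M;x,y)]$ is a free rank-one left $\bQ[\pi_1(M;x)]$-module under pre-concatenation, generated by $\gamma_0$, and that $c_{\gamma_0}$ is the corresponding module isomorphism sending $1$ to $\gamma_0$. Hence $c_{\gamma_0}(J^{N+1})=J^{N+1}\bQ[\pi_1(M;x,y)]$, so $c_{\gamma_0}$ descends to a $\bQ$-linear isomorphism
\begin{equation*}
\bQ[\pi_1(M;x)]/J^{N+1}\xrightarrow{\sim}\bQ[\pi_1(M;x,y)]/J^{N+1}\bQ[\pi_1(M;x,y)].
\end{equation*}
Dualising and tensoring with $\bC$ identifies the target of \cref{thm:pi1-deRham_Chen_path} with the target of \cref{thm:pi1-deRham_Chen_loop}.

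Next I would compare the integration pairings across this identification using Chen's path-composition formula
\begin{equation*}
\int_{\alpha\cdot\gamma_0}\omega_1\circ\cdots\circ\omega_k=\sum_{i=0}^{k}\left(\int_\alpha\omega_1\circ\cdots\circ\omega_i\right)\left(\int_{\gamma_0}\omega_{i+1}\circ\cdots\circ\omega_k\right),
\end{equation*}
which I would rewrite as $\int_{\alpha\cdot\gamma_0}\omega=\int_\alpha T_{\gamma_0}(\omega)$, where $T_{\gamma_0}:=(\mathrm{id}\otimes\int_{\gamma_0})\circ\Delta$ is the convolution operator on $H^0(B^\bullet(\Omega^\bullet_M))$ induced by the group-like element $\int_{\gamma_0}(-)$. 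This $T_{\gamma_0}$ preserves the length filtration $L^{-\bullet}$ and is invertible with inverse $T_{\gamma_0^{-1}}$ (since $\int_{\gamma_0}$ and $\int_{\gamma_0^{-1}}$ are mutually inverse in the convolution algebra of the Hopf algebra $B^\bullet(\Omega^\bullet_M)$). Composing the loop-integration isomorphism of \cref{thm:pi1-deRham_Chen_loop} with $T_{\gamma_0}$ therefore produces the path-integration map of the new theorem, which is then itself an isomorphism.

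The main obstacle will be verifying that the path-integration pairing really descends to the quotient by $J^{N+1}\bQ[\pi_1(M;x,y)]$, i.e.\ that $\int_\beta\omega=0$ for every $\omega\in L^{-N}H^0(B^\bullet(\Omega^\bullet_M))$ and $\beta\in J^{N+1}\bQ[\pi_1(M;x,y)]$. Writing such a $\beta$ as a $\bQ$-linear combination of products $(\alpha_0-1)\cdots(\alpha_N-1)\gamma_0$ with $\alpha_i\in\pi_1(M;x)$ and iterating the path-composition formula, this vanishing reduces to the loop-level vanishing $\int_{(\alpha_0-1)\cdots(\alpha_N-1)}T_{\gamma_0}(\omega)=0$ that already underlies \cref{thm:pi1-deRham_Chen_loop}, using that $T_{\gamma_0}$ preserves the length filtration. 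Once this descent is in place, the conclusion is a direct diagram chase.
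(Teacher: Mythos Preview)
Your proposal is correct and follows exactly the approach the paper indicates. The paper does not give a detailed proof of \cref{thm:pi1-deRham_Chen_path}; it cites Chen and offers only the one-sentence reduction ``since there is an isomorphism $\bQ[\pi_1(M;x,y)]\cong\bQ[\pi_1(M;x)]$ given by the concatenation of a fixed path $\gamma_0\in \pi_1(M;x,y)$,'' and you have fleshed out precisely this idea using the path-composition formula (the paper's \cref{prop:properties_of_iterated_integral}\ref{item:path_connection}).
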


These isomorphisms can be viewed as an iterated (non-abelian) generalization of the classical de Rham theorem
\begin{align}
    H^1_\dR(M,\bC)\cong H^1(M,\bQ)\otimes_\bQ\mathbb{C}.
\end{align}
They give comparison isomorphisms of de Rham and Betti cohomologies in period theory and provide a basis for investigating periods defined by iterated integrals. Also, Morgan \cite{Mo78} and Hain \cite{Ha87} showed that the comparison isomorphism carries a mixed Hodge--Tate structure naturally, which gives a penetrating insight gives a major impact for the study of iterated integral and algebraic geometry (especially motives theory).

In this paper, we will consider an analogue of this theorem to investigate periods of a family of elliptic curves. Let $$\cE:= \left\{([X:Y:Z],\lambda)\in\bP^2\times (\bP^1\setminus\{0,1,\infty\}) \;\middle|\; Y^2Z=X(Z-X)(Z -\lambda X)\right\}$$ be a family of elliptic curves with a fiber bundle $\pi:\cE\to S:=\bP^1\setminus\{0,1,\infty\},\;\pi([X:Y:Z],\lambda)=\lambda$, which is called the Legendre family. Then, for each $\lambda \in S$, the fiber $$E_\lambda:=\left\{[X:Y:Z]\in\bP^2 \;\middle|\; Y^2Z=X(Z-X)(Z-\lambda X)\right\}$$ is an elliptic curve. Then, the Riemann--Hilbert correspondence (see \cite{De}) gives an isomorphism
\begin{align}
    \mathcal{H}^1_\dR(\cE/S)\cong R^1\pi_\ast\locQ_S\otimes_\bQ \mathcal{O}_S.
\end{align}
Here, the left hand side $\mathcal{H}^1_\dR(\cE/S)$ is the de Rham cohomology sheaf on $S$ whose stalk at $\lambda\in S$ is $\mathbb{H}^1_\dR(E_\lambda)$. The right hand side $R^1\pi_\ast\locQ_S$ is the locally constant sheaf whose stalk at $\lambda\in S$ is $H^1(E_\lambda,\bQ)$.

We construct its iteration to consider the following iterated integral. Fix a non-empty open subset $U\subseteq S$ and a point $b\in U$. For $\eta_{i,j}\in \Gamma(U,\mathcal{H}^1(\cE/S))$, $\sigma_{i,j}\in H_1(E_b,\bQ)$, $\omega_i\in H^1_\dR(U)$ ($i=1,\dots,r$, $j=1,\dots,k_i$), and $\gamma:[0,1]\to S$, consider iterated integrals of the forms
\begin{align}
    \left\langle[\bdeta_1;\omega_1|\cdots|\bdeta_r;\omega_r], [\bdsigma_1|\cdots|\bdsigma_r]\otimes\gamma\right\rangle:=\int_\gamma (f_1(\lambda)\omega_1(\lambda))\circ\cdots\circ(f_r(\lambda)\omega_r(\lambda)),
\end{align}
where $f_i(\lambda):=\prod_{j=1}^{k_i}\int_{\widetilde{\sigma}_{i,j}}\eta_{i,j}$ and $\widetilde{\sigma}_{i,j}$ is the lift of the cycle $\sigma_{i,j}$ along $\gamma$. Here, it is abbreviated as $\bdeta_i=(\eta_{i,1},\dots,\eta_{i,k_i})$ and $\bdsigma_i=(\sigma_{i,1},\dots,\sigma_{i,k_i})$.

Then, we have the following theorem, which is a "relative version" of \cref{thm:pi1-deRham_Chen_loop,thm:pi1-deRham_Chen_path}. Let $B^\bullet(\Sym \mathcal{H}^1(\cE/S)\otimes_{\mathcal{O}_S}\Omega^\bullet_S)$ be a the bar complex of dg-algebra $\Sym \mathcal{H}^1(\cE/S)\otimes_{\mathcal{O}_S}\Omega^\bullet_S$ whose differential $d:\Sym \mathcal{H}^1(\cE/S)\otimes_{\mathcal{O}_S}\Omega^r_S\to\Sym \mathcal{H}^1(\cE/S)\otimes_{\mathcal{O}_S}\Omega^{r+1}_S$ is defined by 
\begin{align}
    d[\eta_1\cdots\eta_k;\omega]:=\sum_{j=1}^k[\eta_1\cdots\eta_{j-1}(d_\lambda\eta_j)\eta_{j+1}\cdots\eta_k;d\lambda\wedge\omega]+[\eta_1\cdots\eta_k;d\omega],
\end{align}
where $d_\lambda$ is the differential with respect to the parameter $\lambda\in S$ determined by the Gauss--Manin connection (see \cite{Ka70}). Then, the 0-th cohomology of the bar complex can be written by $\Sym H_1(E_b,\bQ)$ and $\bQ[\pi_1(S;a,b)]$ for any $a,b\in S$ as follows.

\begin{thm} \label{thm:pi1-deRham_family}
    For each open set $U\subseteq S$, $M, N\in\bZge{0}$ and $a,b\in U$, the integration of iterated integrals induces an isomorphism
    \begin{align} \label{eq:pi1-deRham_family}
        &L^{-N}\mathbb{H}^0(U, B^\bullet(\Sym^{\le M} \mathcal{H}^1(\cE|_U/U)\otimes_{\mathcal{O}_U}\Omega^\bullet_U))\\
        &\cong
        \left((\Sym^{\le M} H_1(E_b,\bQ))^{\otimes \le N}\otimes_\bQ\bQ[\pi_1(U;a,b)]/J^{N+1}\bQ[\pi_1(U;a,b)]\otimes_\bQ\bC\right)^\vee,
    \end{align}
    where $J\subseteq \mathbb{Q}[\pi_1(U;a)]$ is the augmentation ideal. In particular, if $a=b$, we have
    \begin{align}
        &L^{-N}\mathbb{H}^0(U, B^\bullet(\Sym^{\le M} \mathcal{H}^1(\cE|_U/U)\otimes_{\mathcal{O}_U}\Omega^\bullet_U))\\
        &\cong
        \left((\Sym^{\le M} H_1(E_a,\bQ))^{\otimes \le N}\otimes_\bQ\bQ[\pi_1(U;a)]/J^{N+1}\otimes_\bQ\bC\right)^\vee.
    \end{align}
\end{thm}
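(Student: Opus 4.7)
The plan is to extend Chen's $\pi_1$-de Rham theorem (\cref{thm:pi1-deRham_Chen_path}) to the relative setting by treating $\Sym^{\le M}\mathcal{H}^1(\cE|_U/U)$ as a flat coefficient system via the Gauss--Manin connection and absorbing it through the Riemann--Hilbert correspondence. The pairing itself is defined by the iterated integration formula from the introduction: send a bar class $[\bdeta_1;\omega_1|\cdots|\bdeta_r;\omega_r]$ in the left-hand side together with $[\bdsigma_1|\cdots|\bdsigma_r]\otimes\gamma$ on the right to $\int_\gamma(f_1\omega_1)\circ\cdots\circ(f_r\omega_r)$, with $f_i(\lambda):=\prod_j\int_{\widetilde{\sigma}_{i,j}}\eta_{i,j}$. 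Well-definedness at the hypercohomology level hinges on the identity
\begin{equation*}
    d_\lambda\!\int_{\widetilde{\sigma}}\eta=\int_{\widetilde{\sigma}}d_\lambda\eta,
\end{equation*}
where $d_\lambda$ on the right is the Gauss--Manin connection; this converts the bar differential written in the statement into the ordinary de Rham differential of the product $f_i\omega_i$, so the iterated integral depends only on the bar cohomology class. Shuffle relations and Chen's nilpotency argument then give vanishing on $J^{N+1}\bQ[\pi_1(U;a,b)]$, and the bound $\Sym^{\le M}$ is preserved because the Gauss--Manin connection acts as a first-order operator in the symmetric-algebra generators.

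\textbf{Induction on $N$.} The base case $N=0$ reduces to
\begin{equation*}
    \mathbb{H}^0(U,\Sym^{\le M}\mathcal{H}^1(\cE|_U/U))\cong\bigl(\Sym^{\le M}H_1(E_b,\bQ)\otimes_\bQ\bC\bigr)^\vee,
\end{equation*}
which is Riemann--Hilbert for the flat bundle $\Sym^{\le M}\mathcal{H}^1(\cE|_U/U)$, whose local system of horizontal sections is $(\Sym^{\le M}H_1(E_b,\bQ))^\vee$. For the inductive step I would compare the short exact sequence arising from the length filtration on the bar complex with the short exact sequence given by the powers of the augmentation ideal. The length-$N$ associated graded piece of the bar complex is an $N$-fold tensor product of length-one bar elements, which by K\"unneth and the inductive hypothesis pairs isomorphically with $J^N\bQ[\pi_1(U;a,b)]/J^{N+1}\bQ[\pi_1(U;a,b)]$ tensored with $(\Sym^{\le M}H_1(E_b,\bQ))^{\otimes N}$; the five lemma promotes the isomorphism from $N-1$ to $N$ and, once one runs over all tensor lengths $\leq N$, assembles the factor $(\Sym^{\le M}H_1(E_b,\bQ))^{\otimes\le N}$ on the right-hand side.

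\textbf{Main obstacle.} The chief technical difficulty is showing that the associated graded of the length filtration really decomposes as the expected tensor product, since the Gauss--Manin differential entangles the $\Sym\mathcal{H}^1$ and $\Omega^\bullet_U$ factors within a single bar slot, and since taking hypercohomology on $U$ introduces \v{C}ech-type corrections that are not manifestly compatible with the filtration splitting. Overcoming this will require a strictness-type compatibility between the length and symmetric filtrations, likely via the degeneration of a natural spectral sequence attached to the bicomplex $B^\bullet(\Sym^{\le M}\mathcal{H}^1\otimes\Omega^\bullet_U)$ with its two filtrations, paralleling Morgan's and Hain's treatments of bar-complex mixed Hodge structures on $\pi_1$ once the Gauss--Manin contribution has been absorbed into the coefficient local system.
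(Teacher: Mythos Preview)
Your strategy---induction on $N$, comparing the length filtration on the bar side with the $J$-adic filtration on the path side and closing with the five lemma---is natural, but it is \emph{not} the route the paper takes, and the obstacle you flag at the end is exactly what the paper's machinery is built to circumvent.

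A small correction first: your base case is misstated. For $N=0$ the filtration piece $L^{0}$ contains only the empty bar, so $L^{0}\mathbb{H}^0=\bC$; on the other side $(\Sym^{\le M}H_1)^{\otimes\le 0}=\bQ$ and $\bQ[\pi_1]/J=\bQ$, so the right-hand side is also $\bC$. The Riemann--Hilbert identification you wrote down is (roughly) what is needed for a graded piece, not for the $N=0$ anchor.

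More substantively, the paper never attempts to identify the associated graded of the length filtration directly. It proceeds in two stages. First (\S3.2), cosimplicial and normalization machinery (Appendix~A) together with Riemann--Hilbert rewrite the left-hand side as
\[
L^{-N}\mathbb{H}^0\bigl(U,B^\bullet(\Sym^{\le M}\mathcal{H}^1\otimes\Omega^\bullet_U)\bigr)\;\cong\;\mathbb{H}^N\bigl(U^N,\aKb{N}\bigr)\otimes\bC,
\]
where $\aKb{N}$ is an explicit Koszul-type complex of sheaves on $U^N$ built from the diagonals $Y_i=\{s_i=s_{i+1}\}$ (with $s_0=a$, $s_{N+1}=b$) and the local systems $\Sym R^\bullet\widetilde{\pi}_\ast\locQ$. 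Second (\S3.3), a Beilinson-type argument identifies $\mathbb{H}^N(U^N,\aKb{N})$ with the desired dual. The induction on $N$ is carried out \emph{sheaf-theoretically}: one lets $b$ vary, pushes forward along $\varpi:U^N\times U\to U$, and feeds the short exact sequence of complexes
\[
0\to(\iota_b)_\ast\aKb{N-1}[-1]\to\aKb{N}\to\atK{N-1}\to 0
\]
into Leray spectral sequences for $\varpi$; the remaining identifications are group-cohomology computations for $\pi_1(U;a)$ acting on $(\Sym^{\le M}H_1(E_a))^{\otimes\le N-1}\otimes\bQ[\pi_1]$ and its $J$-adic pieces.

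The point is that the ``entanglement'' you worry about---the Gauss--Manin differential mixing $\Sym\mathcal{H}^1$ with $\Omega^\bullet_U$, and \v{C}ech corrections in hypercohomology---is absorbed into the geometry of the diagonal stratification of $U^N$ and handled by standard exact sequences of sheaves, rather than by any strictness or spectral-sequence degeneration for the bifiltered bar complex. Your outline, as it stands, does not resolve that obstacle: invoking Morgan--Hain is suggestive but not a proof, and their arguments are formulated for constant coefficients, not for a nontrivial variation such as $\Sym^{\le M}\mathcal{H}^1(\cE/U)$. If you want to push the direct filtration-comparison approach through, you should expect to reproduce work of comparable depth to the paper's Beilinson argument; at present the proposal is an outline with the hard step left open.
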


The left hand side 
$$L^{-N}\mathbb{H}^0(U, B^\bullet(\Sym^{\le M} \mathcal{H}^1(\cE|_U/U)\otimes_{\mathcal{O}_S|_U}\Omega^\bullet_S|_U))$$
is the cohomology of the bar complex $$B^\bullet(\Sym^{\le M} \mathcal{H}^1(\cE|_U/U)\otimes_{\mathcal{O}_S|_U}\Omega^\bullet_S|_U)$$ of the length less than or equal to $N$ part, which is explained in the first part of \cref{subsec:preparation}. If we take $U=S$, we have the following corollary.

\begin{cor} \label{cor:pi1-deRham_family}
    For each $M, N\in\bZge{0}$ and $a,b\in S$, the integration of iterated integrals induces an isomorphism
    \begin{align}
        &L^{-N}\mathbb{H}^0(S, B^\bullet(\Sym^{\le M} \mathcal{H}^1(\cE/S)\otimes_{\mathcal{O}_S}\Omega^\bullet_S))\\
        &\cong
        \left((\Sym^{\le M} H_1(E_b,\bQ))^{\otimes \le N}\otimes_\bQ\bQ[\pi_1(S;a,b)]/J^{N+1}\bQ[\pi_1(S;a,b)]\otimes_\bQ\bC\right)^\vee,
    \end{align}
    where $J\subseteq \mathbb{Q}[\pi_1(S;a)]$ is the augmentation ideal. In particular, if $a=b$, we have
    \begin{align}
        &L^{-N}\mathbb{H}^0(S, B^\bullet(\Sym^{\le M} \mathcal{H}^1(\cE/S)\otimes_{\mathcal{O}_S}\Omega^\bullet_S))\\
        &\cong
        \left((\Sym^{\le M} H_1(E_a,\bQ))^{\otimes \le N}\otimes_\bQ\bQ[\pi_1(S;a)]/J^{N+1}\otimes_\bQ\bC\right)^\vee.
    \end{align}
\end{cor}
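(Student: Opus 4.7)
The plan is to deduce this corollary as the special case $U = S$ of Theorem \ref{thm:pi1-deRham_family}. Since $S = \bP^1 \setminus \{0,1,\infty\}$ is itself open in $S$ and the hypothesis $a,b \in U$ becomes $a,b \in S$, this amounts to a direct substitution into the statement we have already established.

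First I would verify that every object appearing on both sides of \eqref{eq:pi1-deRham_family} reduces to the corresponding object in the corollary when $U = S$. The restriction $\cE|_S$ equals $\cE$ as a family over $S$, and likewise $\mathcal{O}_S|_S = \mathcal{O}_S$ and $\Omega^\bullet_S|_S = \Omega^\bullet_S$, so the dg-algebra $\Sym^{\le M} \mathcal{H}^1(\cE|_S/S) \otimes_{\mathcal{O}_S} \Omega^\bullet_S$ and its bar complex coincide with those in the corollary. Hence the length-$N$ filtered piece of the hypercohomology $\mathbb{H}^0$ matches on the left-hand side. On the right-hand side, the fiber $E_b$ at $b \in S$ is the same elliptic curve, so $H_1(E_b,\bQ)$ is unchanged, and $\pi_1(U;a,b) = \pi_1(S;a,b)$ together with its augmentation ideal $J \subseteq \bQ[\pi_1(S;a)]$. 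The isomorphism of the corollary therefore follows verbatim from \eqref{eq:pi1-deRham_family}, and the $a=b$ case is the corresponding specialization.

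There is no substantial obstacle: the entire content is already contained in Theorem \ref{thm:pi1-deRham_family}, and the corollary simply records the most natural global instance of the theorem over the full base $S = \bP^1 \setminus \{0,1,\infty\}$, which is the geometrically meaningful setting for the Legendre family of elliptic curves.
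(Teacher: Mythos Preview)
Your proposal is correct and matches the paper's approach exactly: the paper states the corollary immediately after Theorem~\ref{thm:pi1-deRham_family} with the remark ``If we take $U=S$, we have the following corollary,'' and gives no further argument. Your verification that all objects restrict trivially under $U=S$ is a faithful unpacking of this one-line specialization.
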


In a sense, \cref{thm:pi1-deRham_family} provides an isomorphism between de Rham cohomology (on the left) and Betti cohomology (on the right), and the existence of a motivic fundamental group that induces this isomorphism is expected. For example, the mixed elliptic motives, which is considered in particular detail in \cite{Pa10}, or the universal mixed elliptic motives, which is introduced in \cite{HaMa15}, are considered to be a candidates.

We can also extend \cref{thm:pi1-deRham_family} to the case of $a,b\in\{0,1,\infty\}$ by using the theory of tangential base point and regularized iterated integrals.
\begin{thm} \label{thm:pi1-deRham_family_reg}
    For each open set $U\subseteq S$, $M, N\in\bZge{0}$ and two tangential base points $\mathbf{a}, \mathbf{b}$ on $U$, the integration of regularized iterated integrals induces an isomorphism
    \begin{align} \label{eq:pi1-deRham_family_reg}
        &L^{-N}\mathbb{H}^0(U, B^\bullet(\Sym^{\le M} \mathcal{H}^1(\cE|_U/U)\otimes_{\mathcal{O}_U}\Omega^\bullet_U))\\
        &\cong
        \left((\Sym^{\le M} H_1(E_\mathbf{b},\bQ))^{\otimes \le N}\otimes_\bQ\bQ[\pi_1(U;\mathbf{a},\mathbf{b})]/J^{N+1}\bQ[\pi_1(U;\mathbf{a},\mathbf{b})]\otimes_\bQ\bC\right)^\vee,
    \end{align}
    where $J\subseteq \mathbb{Q}[\pi_1(U;\mathbf{a})]$ is the augmentation ideal.
\end{thm}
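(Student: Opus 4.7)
The plan is to reduce Theorem \ref{thm:pi1-deRham_family_reg} to Theorem \ref{thm:pi1-deRham_family} via a regularization that replaces tangential base points by nearby interior points. A tangential base point $\mathbf{a}$ at $p\in\overline{U}\setminus U$ is specified by a nonzero tangent vector $v_\mathbf{a}\in T_pS$; for small $\epsilon>0$ set $a(\epsilon):=p+\epsilon v_\mathbf{a}$, and similarly for $\mathbf{b}$. The strategy is to work at $a(\epsilon), b(\epsilon)$, where Theorem \ref{thm:pi1-deRham_family} applies, and extract the $\epsilon\to 0$ asymptotics in a controlled way.

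First I would set up regularized iterated integrals in the spirit of Deligne's tangential base points (see \cite{De}). Every form appearing in $B^\bullet(\Sym^{\le M}\cH^1(\cE|_U/U)\otimes_{\mathcal{O}_U}\Omega^\bullet_U)$ has at worst logarithmic singularities at $\{0,1,\infty\}$: this is clear for forms in $\Omega^\bullet_U$, and for the fiberwise periods $\int_{\widetilde{\sigma}_{i,j}}\eta_{i,j}$ it follows from the fact that the Gauss--Manin connection on $\cH^1(\cE/S)$ is regular singular with unipotent local monodromy (Picard--Lefschetz). Hence for any path $\gamma$ from $\mathbf{a}$ to $\mathbf{b}$, the iterated integral along the truncated subpath from $a(\epsilon)$ to $b(\epsilon)$ is a polynomial in $\log\epsilon$ with coefficients holomorphic in $\epsilon$, and the regularized integral is defined as the constant term at $\epsilon=0$.

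Second, I would verify that the pairing so defined is well defined on the right-hand side of \eqref{eq:pi1-deRham_family_reg}: shuffle-compatible, homotopy invariant, and vanishing on $J^{N+1}\bQ[\pi_1(U;\mathbf{a},\mathbf{b})]$. All three reduce to the interior case together with the fact that the constant-term map on polynomials in $\log\epsilon$ is an algebra map compatible with the shuffle product. To transport from $a(\epsilon), b(\epsilon)$ back to $\mathbf{a},\mathbf{b}$, I would use the concatenation isomorphism $\bQ[\pi_1(U;a(\epsilon),b(\epsilon))]\cong\bQ[\pi_1(U;\mathbf{a},\mathbf{b})]$ via tangential path germs, and the parallel transport identification $H_1(E_{b(\epsilon)},\bQ)\cong H_1(E_\mathbf{b},\bQ)$ arising from the limiting mixed Hodge structure of the Legendre family at the boundary.

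The main obstacle, I expect, lies in the coherence of the two sources of logarithmic divergence --- those coming from bar-direction forms on $U$ and those coming from the periods $\int_{\widetilde{\sigma}}\eta$ in the fiber direction --- to ensure that the joint $\log\epsilon$-expansion admits a sensible constant term compatible with both the shuffle product and the bar differential $d$. This requires tracking the interaction of the Gauss--Manin connection with the local trivializations near each puncture and checking that no cross-terms between the two directions spoil shuffle-multiplicativity. Once this compatibility is secured, the square comparing \eqref{eq:pi1-deRham_family} at the interior base points $a(\epsilon), b(\epsilon)$ with \eqref{eq:pi1-deRham_family_reg} at $\mathbf{a}, \mathbf{b}$ commutes by construction, and the isomorphism follows formally from Theorem \ref{thm:pi1-deRham_family}.
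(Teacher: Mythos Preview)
Your overall plan---reduce to Theorem \ref{thm:pi1-deRham_family} at interior base points---matches the paper, but your execution is more laborious than necessary, and the ``main obstacle'' you flag is one the paper sidesteps entirely.

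The paper does not move the base points $a(\epsilon), b(\epsilon)$ toward the boundary and extract $\log\epsilon$ asymptotics. Instead it fixes arbitrary \emph{interior} points $a',b'\in U$ and chooses paths $\gamma_1$ from $a'$ to $\mathbf{a}$ and $\gamma_2$ from $\mathbf{b}$ to $b'$. The key input is the path-concatenation formula for regularized iterated integrals (Proposition \ref{prop:properties_of_regularized_iterated_integral}\ref{item:connection_for_reg_iterated_integral}), which gives
\[
\langle w,\bdsigma\otimes\gamma_1\gamma\gamma_2\rangle=\sum_{w=w_1w_2w_3}\langle w_1,\bdsigma_1\otimes\gamma_1\rangle\langle w_2,\bdsigma_2\otimes\gamma\rangle\langle w_3,\bdsigma_3\otimes\gamma_2\rangle.
\]
If $\bdsigma\otimes\gamma$ pairs to zero against every $w$ of length $\le N$, then so does $\bdsigma\otimes\gamma_1\gamma\gamma_2$; but $\gamma_1\gamma\gamma_2$ is now an ordinary path between interior points, so Theorem \ref{thm:pi1-deRham_family} forces $\bdsigma\otimes\gamma_1\gamma\gamma_2=0$, hence $\bdsigma\otimes\gamma=0$. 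This is the whole argument for non-degeneracy; well-definedness is simply inherited from Proposition \ref{prop:properties_of_regularized_iterated_integral}.

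What this buys: the paper never has to analyze the joint $\log\epsilon$-expansion or the interaction between Gauss--Manin divergences and base-direction divergences. Those issues are already absorbed into the black-box statement that regularized iterated integrals exist and satisfy the concatenation formula. Your approach would work, but you would be re-deriving, in this specific setting, facts that Proposition \ref{prop:properties_of_regularized_iterated_integral} already packages. The concatenation trick with \emph{fixed} interior points is both shorter and avoids the limiting procedure altogether.
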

This allows us to capture this iterative integral in a form that includes multiple zeta values. It is mentioned in detail in \cref{subsec:general_setting,subsec:regularized_iterated_integral}.

We now organize this paper as follows. In \cref{sec:iterated_integrals}, we will introduce iterated integrals in general setting and describe some properties. After that, we will introduce our target. In \cref{sec:main_theorem}, we will give a proof of \cref{thm:pi1-deRham_family}. We will also describe the regularized version in \cref{subsec:regularized_iterated_integral}. In \cref{app:cosimplicial_obj}, we will introduce cosimplicial object, which is used in the proof of \cref{thm:pi1-deRham_family}.

\section*{Acknowledgement}
The author would like to express his deepest gratitude to his supervisor, Professor Takuya Yamauchi for his insightful comments. The author would like to thank the WISE Program for AI Electronics and AGS RISE Program of Tohoku University for their financial support to carry out our study.

\section{Iterated integrals} \label{sec:iterated_integrals}

In this section, we give a setting of this paper.

\begin{notn}
    The following notation will be used.
    \begin{itemize}
        \item For a finite set $X$, we denote by $\# X$ the cardinality of $X$.
        \item For $N\in \bZge{0}$, we denote $I_N:=\{0,\dots,N\}$.
        \item For $k\in \bZge{1}$, we denote $\Delta_k:=\{(t_1,\dots,t_k)\in[0,1]^k\,|\,0<t_1<\cdots<t_k<1\}$.
        \item For $n\in \bZge{1}$, we denote by $S_n$ the symmetry group of degree $n$.
        \item For two paths $\gamma_0,\gamma_1:[0,1]\to M$ with $\gamma_0(1)=\gamma_1(0)$, the connected path is denoted by $\gamma_0\gamma_1:[0,1]\to M$, which is defined by $$\gamma_0\gamma_1(t):=\begin{cases}
            \gamma_0(2t), & 0<t<\frac{1}{2},\\
            \gamma_1(2t-1), & \frac{1}{2}<t<1.
        \end{cases}$$
        \item For a path $\gamma:[0,1]\to M$, the inverse path is denoted by $\gamma^{-1}:[0,1]\to M$, which is defined by $$\gamma^{-1}(t):=\gamma(1-t).$$
    \end{itemize}
\end{notn}

\subsection{General setting} \label{subsec:general_setting}

First, we will mention some properties of the iterated integral on a general manifold $M$.

\begin{dfn}
    For a piecewise smooth path $\gamma:[0,1]\to M$ and differential forms $\omega_1,\dots,\omega_k$ on a manifold $M$, the iterated integral is defined by
    \begin{align}
        \int_\gamma\omega_1\circ\cdots\circ\omega_k:=\int_{\Delta_k}\gamma^\ast\omega_1(t_1)\cdots\gamma^\ast\omega_k(t_k).
    \end{align}
\end{dfn}

Iterated integrals have the following properties.

\begin{prop} \label{prop:properties_of_iterated_integral}
    The following properties hold.
    \begin{enumerate}
        \item For any path $\gamma:[0,1]\to M$ and closed $1$-forms $\omega_1,\dots,\omega_r$ with $\omega_i\wedge\omega_{i+1}=0$ ($i=1,\dots,r-1$), then $\int_\gamma \omega_1\circ\cdots\circ\omega_r$ is homotopy equivalent.
        
        \item For any path $\gamma:[0,1]\to M$ and $1$-forms $\omega_1,\dots,\omega_r,\omega_{r+1},\dots,\omega_{r+s}$, it holds
        \begin{align}
            \int_\gamma \omega_1\circ\cdots\circ\omega_r\int_\gamma \omega_{r+1}\circ\cdots\circ\omega_{r+s}=\sum_{\sigma\in S_{r,s}} \int_\gamma\omega_{\sigma^{-1}(1)}\circ\cdots\circ\omega_{\sigma^{-1}(r+s)}, 
        \end{align}
        where $S_{r,s}$ is the $(r,s)$-shuffle
        \begin{align}
            S_{r,s}:=\left\{\sigma\in \mathcal{S}_{r+s}\,\middle|\,\begin{array}{c}\sigma(1)<\cdots<\sigma(r)\\\sigma(r+1)<\cdots<\sigma(r+s)\end{array}\right\}.
        \end{align}
        
        \item For any paths $\gamma_0,\gamma_1:[0,1]\to M$ with $\gamma_0(1)\gamma_1(0)$ and $1$-forms $\omega_1,\dots,\omega_r$, it holds
        \begin{align}
            \int_{\gamma_0\gamma_1} \omega_1\circ\cdots\circ\omega_r=\sum_{i=0}^r \int_{\gamma_0}\omega_1\circ\cdots\circ\omega_i\int_{\gamma_1}\omega_{i+1}\circ\cdots\circ\omega_r.
        \end{align} \label{item:path_connection}

        \item For any path $\gamma:[0,1]\to M$ and $1$-forms $\omega_1,\dots,\omega_r$, it holds
        \begin{align}
            \int_{\gamma^{-1}} \omega_1\circ\cdots\circ\omega_r=(-1)^r \int_\gamma\omega_r\circ\cdots\circ\omega_1.
        \end{align}
    \end{enumerate}
\end{prop}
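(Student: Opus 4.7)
The plan is to prove the four properties in order of increasing subtlety: items (3), (4), and (2) are combinatorial identities that reduce to direct manipulations of the defining integral over $\Delta_k$, while item (1) requires a Stokes-theorem argument along a homotopy.

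For (4), the relation $\gamma^{-1}(t) = \gamma(1-t)$ gives $(\gamma^{-1})^*\omega_i = -\iota^*\gamma^*\omega_i$, where $\iota(t) = 1 - t$; this contributes a factor $(-1)^r$ overall, and the change of variables $s_i = 1 - t_{r+1-i}$ sends $\Delta_r$ bijectively to itself while reversing the order of the factors, producing $\int_\gamma \omega_r \circ \cdots \circ \omega_1$. For (3), I would partition $\Delta_r$ by the largest index $i$ with $t_i \leq 1/2$ (setting $t_0 = 0$); the piece corresponding to $i$ is diffeomorphic to $\Delta_i \times \Delta_{r-i}$ via the rescalings $u_j = 2t_j$ for $j \leq i$ and $u_j = 2t_j - 1$ for $j > i$, and under these substitutions the forms $(\gamma_0\gamma_1)^*\omega_j$ become $\gamma_0^*\omega_j$ or $\gamma_1^*\omega_j$ with matching Jacobian factors, so integration on each piece yields one summand of the right-hand side. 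For (2), I would decompose $\Delta_r \times \Delta_s$ into $|S_{r,s}| = \binom{r+s}{r}$ open pieces (up to a measure-zero set), one for each way of merging the two sorted sequences $t_1 < \cdots < t_r$ and $t_{r+1} < \cdots < t_{r+s}$ into a single sorted sequence; each piece maps diffeomorphically onto $\Delta_{r+s}$ by a suitable reordering of coordinates, and applying Fubini produces exactly the shuffle sum.

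For (1), I would consider a smooth homotopy $H \colon [0,1]^2 \to M$ fixing endpoints, with $H(0, \cdot) = \gamma$ and $H(1, \cdot) = \gamma'$, and show that $I(s) := \int_{H(s, \cdot)} \omega_1 \circ \cdots \circ \omega_r$ is constant in $s$. The clean way is to pull back each $\omega_j$ to $[0,1] \times \Delta_r$ via the evaluation map $(s, t_1, \ldots, t_r) \mapsto H(s, t_j)$, form the wedge $\Omega$ of these $r$ pullbacks, and apply Stokes' theorem on $[0,1] \times \Delta_r$. The interior contribution $d\Omega$ splits into $r$ summands, each containing a factor $d\omega_i = 0$ (since each $\omega_i$ is closed), so it vanishes. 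The boundary $\{0, 1\} \times \Delta_r$ yields $I(1) - I(0)$. The remaining codimension-one faces of $\partial\Delta_r$ are either collision faces $\{t_i = t_{i+1}\}$, on which $\Omega$ restricts to a form containing $\omega_i \wedge \omega_{i+1} = 0$, or endpoint faces $\{t_1 = 0\}$ and $\{t_r = 1\}$, on which the corresponding factor of $\Omega$ is pulled back via the constant map to $\gamma(0)$ or $\gamma(1)$ and therefore vanishes. Hence $I(1) = I(0)$, as desired. The main obstacle is the Stokes-type bookkeeping in item (1) --- in particular, verifying that the collision faces produce precisely $\omega_i \wedge \omega_{i+1}$ with the correct signs --- while the other three properties are essentially formal manipulations of the defining integral.
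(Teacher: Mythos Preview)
Your proof proposal is correct and follows the standard arguments for these classical identities. Note, however, that the paper does not supply a proof of this proposition at all: it is stated as a list of well-known properties of iterated integrals and then used freely thereafter. So there is nothing in the paper to compare your argument against; your outline simply fills in what the paper leaves to the reader, and each of the four items is handled in the way one would find in standard references (e.g.\ Chen's original papers or the treatment in \cite{GiFr}).
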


Next, we define tangential base point to define a regularization of iterated integrals.

\begin{dfn}
    A pair $\mathbf{x}=(x,u)$ of a point $x\in M$ and a tangent vector $u\in T_xM\setminus\{0\}$ is called a tangential base point. A (cuspidal) path from a tangential base point $\mathbf{x}=(x,u)$ to a tangential base point $\mathbf{y}=(y,v)$ is a piecewise smooth path $\gamma:[0,1]\to M$ from $x$ to $y$ satisfying
    \begin{align}
        \lim_{t\searrow0}\frac{d\gamma(t)}{dt}=u, &&
        \lim_{t\nearrow1}\frac{d\gamma(t)}{dt}=-v.
    \end{align}
\end{dfn}

In general, integrals can diverge if the differential form is allowed to be logarithmic. In such cases, the integral is defined by fixing the tangent vector of the endpoints of the path and taking the regularized limit of the integral.

\begin{dfn}
    If a function $f:(0,T)\to \bC$ ($T>0$), can be written as
    \begin{align}
        f(t)=f_0(t) + O(\log^k(t))
    \end{align}
    for some $f_0(t)$ with $|f_0(t)|=O(t^\delta)$ ($\delta>0$) as $t\to0$, we say the function $f$ admits a logarithmic asymptotic development. At that time, we define the regularized limit
    \begin{align}
        \reglim_{t\to0}f(t):=\lim_{t\to0}f_0(t).
    \end{align}
\end{dfn}

\begin{dfnprop}[{\cite[Lemma 3.363, p.286]{GiFr}}]
    For a cuspidal path $\gamma:[0,1]\to M$ from a tangential base point $\mathbf{x}=(x,u)$ to a tangential base point $\mathbf{y}=(y,v)$, we define $\gamma_\epsilon:[0,1]\to M$ by $\gamma_\epsilon(t):=\gamma(t(1-\epsilon)+(1-t)\epsilon)$ for sufficiently small $\epsilon>0$. Then, for $1$-forms $\omega_1,\dots,\omega_r$ (allowed to be logarithmic), the function
    \begin{align}
        \epsilon \mapsto \int_{\gamma_\epsilon} \omega_1\circ\cdots\circ\omega_r
    \end{align}
    admits logarithmic asymptotic development. Then, we define the regularized iterated integral as
    \begin{align}
        \int_\gamma\omega_1\circ\cdots\circ\omega_r:=\reglim_{\epsilon\to 0}\int_{\gamma_\epsilon} \omega_1\circ\cdots\circ\omega_r.
    \end{align}
\end{dfnprop}

\begin{ex}
    For $k_1,\dots,k_d\in\bZge{1}$ with $k_d\ge2$, the multiple zeta value is defined by
    \begin{align}
        \zeta(k_1,\dots,k_d):=\sum_{0<n_1<\dots<n_d}\frac{1}{n_1^{k_1}\cdots n_d^{k_d}}.
    \end{align}
    Any multiple zeta values have an iterated integral representation
    \begin{align}\label{eq:MZV_iterated_integral}
        \zeta(k_1,\dots,k_d)=\int_\dch\chi_1\circ\chi_0^{\circ k_1-1}\circ \cdots\circ \chi_1 \circ \chi_0^{\circ k_d-1}
    \end{align}
    on $M=\mathbb{P}^1(\mathbb{C})$ with a cuspidal path $\dch:[0,1]\to M$, $\dch(t)=[t:1]$ and logarithmic $1$-forms $\chi_0:=\frac{dz}{z}$ and $\chi_1:=\frac{dz}{1-z}$. The iterated integral (\ref{eq:MZV_iterated_integral}) converges when the most left form is $\chi_1$ and the most right form is $\chi_0$, and in that case, it became the multiple zeta value by expanding $\chi_1$ as geometric series and calculating the integral term by term. Also, even if the iterated integral diverges, the regularization can give a complex number. It is known that the value of regularized iterated integral can always be written by multiple zeta values in that case.
\end{ex}

The regularized iterated integral has the same properties as \cref{prop:properties_of_iterated_integral}.

\begin{prop} \label{prop:properties_of_regularized_iterated_integral}
    The following properties hold.
    \begin{enumerate}
        \item For any cuspidal path $\gamma:[0,1]\to M$ and logarithmic closed $1$-forms $\omega_1,\dots,\omega_r$ with $\omega_i\wedge\omega_{i+1}=0$ ($i=1,\dots,r-1$), then $\int_\gamma \omega_1\circ\cdots\circ\omega_r$ is homotopy equivalent on the space excluding singularities of differential forms from $M$.
        
        \item \label{item:shuffle_for_reg_iterated_integral} For any cuspidal path $\gamma:[0,1]\to M$ and logarithmic $1$-forms $\omega_1,\dots,\omega_r,\omega_{r+1},\dots,\omega_{r+s}$, it holds
        \begin{align}
            \int_\gamma \omega_1\circ\cdots\circ\omega_r\int_\gamma \omega_{r+1}\circ\cdots\circ\omega_{r+s}=\sum_{\sigma\in S_{r,s}} \int_\gamma\omega_{\sigma^{-1}(1)}\circ\cdots\circ\omega_{\sigma^{-1}(r+s)}. 
        \end{align}

        \item \label{item:connection_for_reg_iterated_integral} For any cuspidal paths $\gamma_0,\gamma_1:[0,1]\to M$ such that the end point of $\gamma_0$ and the start point of $\gamma_1$ coincide (as tangential base points), and $1$-forms $\omega_1,\dots,\omega_r$, it holds
        \begin{align}
            \int_{\gamma_0\gamma_1} \omega_1\circ\cdots\circ\omega_r=\sum_{i=0}^r \int_{\gamma_0}\omega_1\circ\cdots\circ\omega_i\int_{\gamma_1}\omega_{i+1}\circ\cdots\circ\omega_r.
        \end{align}

        \item \label{item:inverse_for_reg_iterated_integral} For any cuspidal path $\gamma:[0,1]\to M$ and $1$-forms $\omega_1,\dots,\omega_r$, it holds
        \begin{align}
            \int_{\gamma^{-1}} \omega_1\circ\cdots\circ\omega_r=(-1)^r \int_\gamma\omega_r\circ\cdots\circ\omega_1.
        \end{align}
    \end{enumerate}    
\end{prop}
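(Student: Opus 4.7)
The overall plan is to bootstrap each statement from the non-regularized analogue in \cref{prop:properties_of_iterated_integral} by working with the approximating family $\gamma_\epsilon$ and then passing to the regularized limit. For each fixed sufficiently small $\epsilon>0$, the path $\gamma_\epsilon$ has its image in the open subset of $M$ obtained by removing the singular loci of the $\omega_i$, so the ordinary iterated integral $\int_{\gamma_\epsilon}\omega_1\circ\cdots\circ\omega_r$ is well defined and all four assertions of \cref{prop:properties_of_iterated_integral} apply to it. What remains is to check that the operations of (i) deforming by a homotopy, (ii) taking a product of two integrals, (iii) splitting at the connection point, and (iv) reversing orientation all commute appropriately with the $\reglim_{\epsilon\to0}$ operation.

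First I would verify a short lemma that if $f,g:(0,T)\to\bC$ admit logarithmic asymptotic developments, then so does $f\cdot g$, and $\reglim_{t\to0}(fg)=(\reglim_{t\to0}f)(\reglim_{t\to0}g)$, which is immediate by multiplying the two expansions and discarding terms that are $O(t^\delta\log^k t)$. This feeds directly into items (2) and (3): applying the shuffle identity of \cref{prop:properties_of_iterated_integral} to $\gamma_\epsilon$ and taking $\reglim_{\epsilon\to0}$ of both sides yields item (\ref{item:shuffle_for_reg_iterated_integral}); applying the concatenation identity to the path $(\gamma_0\gamma_1)_\epsilon$ and noting that it agrees, up to reparameterization of $[0,1/2]$ and $[1/2,1]$, with the concatenation of $(\gamma_0)_{\epsilon_0(\epsilon)}$ and $(\gamma_1)_{\epsilon_1(\epsilon)}$ where $\epsilon_0,\epsilon_1\to0$ as $\epsilon\to0$ yields item (\ref{item:connection_for_reg_iterated_integral}), since the interior endpoint is a genuine point of $M$ away from singularities and so contributes only a convergent factor.

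For item (1), the homotopy invariance, I would argue as follows. Let $H$ be a homotopy on the complement of singularities between two cuspidal paths $\gamma$ and $\gamma'$ sharing the same tangential base points. For each $\epsilon>0$ the restriction $H_\epsilon$ gives a homotopy between $\gamma_\epsilon$ and $\gamma'_\epsilon$ in the complement of singularities, and the hypothesis $\omega_i\wedge\omega_{i+1}=0$ together with $d\omega_i=0$ is exactly what is needed for \cref{prop:properties_of_iterated_integral}(1) to give $\int_{\gamma_\epsilon}\omega_1\circ\cdots\circ\omega_r=\int_{\gamma'_\epsilon}\omega_1\circ\cdots\circ\omega_r$. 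Taking $\reglim_{\epsilon\to0}$ of both sides, using the defining property of the regularized iterated integral, gives the desired equality. Item (\ref{item:inverse_for_reg_iterated_integral}) is the easiest: $(\gamma^{-1})_\epsilon=(\gamma_\epsilon)^{-1}$ up to reparameterization, so \cref{prop:properties_of_iterated_integral}(4) applied to $\gamma_\epsilon$ and then $\reglim_{\epsilon\to0}$ concludes.

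The main obstacle I anticipate is making the compatibility in item (\ref{item:connection_for_reg_iterated_integral}) fully rigorous, because one has to check that the two ``cuts'' of $(\gamma_0\gamma_1)_\epsilon$ induced by the concatenation formula really line up with independently shrinking $\epsilon_0,\epsilon_1$ at only the outer endpoints, while the inner endpoint is held fixed at a non-singular point; i.e., one must confirm that the inner endpoint does not need regularization and that reparameterizations of $\gamma_0,\gamma_1$ that preserve the tangential base points at the outer endpoints do not affect the regularized limit. Once the lemma on multiplicativity of $\reglim$ and the asymptotic development of the inner-cut integrals are in hand, the remaining bookkeeping reduces to the corresponding statement in \cref{prop:properties_of_iterated_integral}.
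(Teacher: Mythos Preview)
The paper does not supply a proof of \cref{prop:properties_of_regularized_iterated_integral}; it states the four properties as standard facts, in parallel with the unproved \cref{prop:properties_of_iterated_integral} and in the same spirit as the citation to \cite{GiFr} for the existence of the logarithmic asymptotic development. So there is nothing to compare against, and your bootstrap-from-$\gamma_\epsilon$ strategy is exactly the standard way one justifies these statements.

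That said, your sketch for item~(1) has a gap worth flagging. You write that the restriction $H_\epsilon$ gives a homotopy between $\gamma_\epsilon$ and $\gamma'_\epsilon$, but $\gamma_\epsilon(0)=\gamma(\epsilon)$ and $\gamma'_\epsilon(0)=\gamma'(\epsilon)$ are in general distinct points, so $H_\epsilon$ is \emph{not} a homotopy relative to endpoints, and \cref{prop:properties_of_iterated_integral}(1) does not apply directly. One fixes this by adjoining, at each end, a short path inside the complement of the singular locus from $\gamma_\epsilon(0)$ to $\gamma'_\epsilon(0)$ (respectively at $t=1$), and then using the concatenation formula plus the shared tangential base point condition to show that the contribution of these short arcs has vanishing regularized limit. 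Your treatments of items~(\ref{item:shuffle_for_reg_iterated_integral}), (\ref{item:connection_for_reg_iterated_integral}), and (\ref{item:inverse_for_reg_iterated_integral}), together with the multiplicativity lemma for $\reglim$, are fine as written.
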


\subsection{Iterated integrals on Legendre family}

In this section, we will introduce iterated integrals on Legendre family, which is the main subject of this paper.

Let $\pi:\cE\to S$ be the Legendre family defined by $$\cE:= \left\{([X:Y:Z],\lambda)\in\bP^2\times (\bP^1\setminus\{0,1,\infty\}) \;\middle|\; Y^2Z=X(Z-X)(Z-\lambda X)\right\}$$ with a fiber bundle $\pi:\cE\to S:=\bP^1\setminus\{0,1,\infty\},\;\pi([X:Y:Z],\lambda)=\lambda$. We consider the following iterated integral. For $\eta_{i,j}\in \Gamma(S,\mathcal{H}^1(\cE/S))$, $\sigma_{i,j}\in H_1(E_b,\bQ)$, $\omega_i\in H^1_\dR(S)$ ($i=1,\dots,r$, $j=1,\dots,k_i$), and $\gamma:[0,1]\to S$, consider iterated integrals of the forms
\begin{align}
    \left\langle[\bdeta_1;\omega_1|\cdots|\bdeta_r;\omega_r], [\bdsigma_1|\cdots|\bdsigma_r]\otimes\gamma\right\rangle:=\int_\gamma (f_1(\lambda)\omega_1(\lambda))\circ\cdots\circ(f_r(\lambda)\omega_r(\lambda)),
\end{align}
where $f_i(\lambda):=\prod_{j=1}^{k_i}\int_{\widetilde{\sigma}_{i,j}}\eta_{i,j}$ and $\widetilde{\sigma}_{i,j}$ is the lift of the cycle $\sigma_{i,j}$ along $\gamma$. Here, it is abbreviated as $\bdeta_i=(\eta_{i,1},\dots,\eta_{i,k_i})$ and $\bdsigma_i=(\sigma_{i,1},\dots,\sigma_{i,k_i})$.

\begin{rem}
    \begin{enumerate}
        \item In the following, we fix the basis of $\Gamma(S,\mathcal{H}^1(\cE/S))$ as $\left\{\theta_0:=\frac{dx}{y}, \theta_1:=\frac{xdx}{y}\right\}$. Here, we show in the affine coordinates $x=\frac{X}{Z}$ and $y=\frac{Y}{Z}$ through the isomorphism
        \begin{align}
            \Gamma(U,\mathcal{H}^1(\cE/S))\cong \Gamma(U,\mathcal{H}^1(\cE/S \setminus \{O_{\cE/S}\})),
        \end{align}
        where $O_{\cE/S}$ is the zero section of $\pi$. Also, the basis $\{\alpha,\beta\}$ of $H_1(E_\lambda,\bQ)$ is taken as follows (see FIGURE \ref{fig:cycles}).
        \begin{itemize}
            \item $\alpha$ is a cycle such that the interval $[0,1]$ is followed on one sheet and then the reverse path is followed back on the other sheet.
            \item $\beta$ is a cycle such that the interval $[0,\lambda^{-1}]$ is followed on one sheet and then the reverse path is followed back on the other sheet.
        \end{itemize}
            Then, if $|\lambda|<1$, it holds
        \begin{align}
            f_0(\lambda):=\int_{\widetilde{\alpha}}\theta_0&=\widetilde{K}(k), & g_0(\lambda):=\int_{\widetilde{\beta}}\theta_0&=\sqrt{-1}\widetilde{K}(k'),\\
            f_1(\lambda):=\int_{\widetilde{\alpha}}\theta_1&=\widetilde{E}(k), & g_1(\lambda):=\int_{\widetilde{\beta}}\theta_1&=\sqrt{-1}\widetilde{E}(k'),
        \end{align}
            where $k=\sqrt{\lambda}$, $k'=\sqrt{1-\lambda}$. In addition, $\widetilde{K}$ and $\widetilde{E}$ are defined by the Gauss hypergeometric function
        \begin{align}
            \hgf{2}{1}{\alpha,\beta}{\gamma}{z}:=\sum_{n\ge0}\frac{(\alpha)_n(\beta)_n}{(\gamma)_nn!}z^n
        \end{align}
        as
        \begin{align}
            \widetilde{K}(k):=\pi\cdot\hgf{2}{1}{\frac{1}{2},\frac{1}{2}}{1}{\lambda}, && \widetilde{E}(k):=\pi\cdot\hgf{2}{1}{\frac{1}{2},\frac{3}{2}}{2}{\lambda},
        \end{align}
        where $(\alpha)_n:=\alpha(\alpha+1)\cdots(\alpha+n-1)$ is the Pochhammer symbol.
        \item We fix the basis of $H^1_\dR(S)$ as $\left\{\chi_0:=\frac{d\lambda}{\lambda}, \chi_1:=\frac{d\lambda}{1-\lambda}\right\}$.
    \end{enumerate}
\end{rem}

\begin{figure}[htbp]
    \centering
    \begin{tikzpicture}[scale=2, line join=round, >=latex, every node/.style={font=\footnotesize}]
    \draw[thick] (0,0) ellipse (2 and 1);

    \draw[thick]  (0.9,0.05) arc (30:153:1 and 0.5);
    \draw[thick] (-1,0.25) arc (180:360:1 and 0.5);

    \node (b) at (-1.1,-0.4) {};
    \node[left] at (b) {$x=0$};
    \fill (b) circle (1.1pt);

    \node (1) at (1.1,0.4) {};
    \node[right] at (1) {$x=\lambda^{-1}$};
    \fill (1) circle (1.1pt);

    \node (0) at (-0.85,-0.7) {};
    \node[below right] at (0) {$x=1$};
    \fill (0) circle (1.1pt);

    \draw[thick, dashed] (0,0) ellipse (1.33 and 0.7);
    \node at (1.4,-0.2) {$\beta$};

    \draw[thick, dashed, rotate=45] (-1.4,0.3) arc (180:0:0.45 and 0.2);
    \draw[thick, dotted, rotate=225] (0.5,-0.3) arc (180:0:0.45 and 0.2);
    \node at (-0.95,-0.15) {$\alpha$};




    \end{tikzpicture}    

    \caption{Cycles of $E_b$}  \label{fig:cycles}
\end{figure}

\begin{ex}
    \begin{enumerate}
        \item If $k_i=0$ for any $i$ and $\gamma=\dch$, we have
        \begin{align}
            \langle[\;;\omega_1|\cdots|\;;\omega_r],[\underbrace{\;|\cdots|\;}_{r}]\otimes\dch\rangle=\int_\dch\omega_1\circ\cdots\circ\omega_r
        \end{align}
        for $\omega_1,\dots\omega_r\in\left\{\chi_0,\chi_1\right\}$, which is the iterated integral representation of multiple zeta values.
        
        \item For a positive integer $r\ge2$, the value
        \begin{align}
            \pi\cdot\sum_{0\le n_1<n_2}\frac{\left(\frac{1}{2}\right)_{n_1}^2}{(n_1!)^2n_2^{r-1}},
        \end{align}
        can be written as
        \begin{align}
            \left\langle[\theta_0;\chi_1|\underbrace{\;;\chi_0|\cdots|\;;\chi_0}_{r-1}],[\alpha|\underbrace{\;|\cdots|\;}_{r-1}]\otimes\dch\right\rangle
        \end{align}
        since the Taylor expansion of $f_0$ is
        \begin{align}
            f_0(\lambda)=\pi\cdot \sum_{n\ge0}\frac{\left(\frac{1}{2}\right)_{n}^2}{(n!)^2}
        \end{align}
        and calculating the integral term by term. Similarly, the value
        \begin{align}
            \pi\cdot\sum_{0\le n_1<n_2}\frac{\left(\frac{1}{2}\right)_{n_1}\left(\frac{3}{2}\right)_{n_1}}{n_1!(n_1+1)!n_2^{r-1}},
        \end{align}
        can be written as
        \begin{align}
            \left\langle[\theta_1;\chi_1|\underbrace{\;;\chi_0|\cdots|\;;\chi_0}_{r-1}],[\alpha|\underbrace{\;|\cdots|\;}_{r-1}]\otimes\dch\right\rangle.
        \end{align}
        Then, these can be treated as periods on the Legendre family in our setting.
    \end{enumerate}
\end{ex}

From \cref{prop:properties_of_regularized_iterated_integral}, our periods have some relations as follows.

\begin{prop}
    Let $\eta_{i,j}\in \Gamma(U,\mathcal{H}^1(\cE/S))$, $\sigma_{i,j}\in H_1(E_b,\bQ)$ and $\omega_i\in H^1_\dR(U)$.
    \begin{enumerate}
        \item \label{item:shuffle_rel} It holds
        \begin{align}
            &\left\langle[\bdeta_1;\omega_1|\cdots|\bdeta_r;\omega_r], [\bdsigma_1|\cdots|\bdsigma_r]\otimes\dch\right\rangle\left\langle[\bdeta_{r+1};\omega_{r+1}|\cdots|\bdeta_{r+s};\omega_{r+s}], [\bdsigma_{r+1}|\cdots|\bdsigma_{r+s}]\otimes\dch\right\rangle\\
            &=\sum_{\sigma\in S_{r,s}}\left\langle[\bdeta_{\sigma^{-1}(1)};\omega_{\sigma^{-1}(1)}|\cdots|\bdeta_{\sigma^{-1}(r+s)};\omega_{\sigma^{-1}(r+s)}], [\bdsigma_{\sigma^{-1}(1)}|\cdots|\bdsigma_{\sigma^{-1}(r+s)}]\otimes\dch\right\rangle.
        \end{align}
        \item \label{item:dual_rel} It holds
        \begin{align}
            \left\langle[\bdeta_1;\omega_1|\cdots|\bdeta_r;\omega_r], [\bdsigma_1|\cdots|\bdsigma_r]\otimes\dch\right\rangle=\left\langle[\bdeta_r;\omega_r|\cdots|\bdeta_1;\omega_1], [\bdsigma_r'|\cdots|\bdsigma_1']\otimes\dch\right\rangle,
        \end{align}
        where $\bdeta'_i=(\eta_{i,1}',\dots,\eta_{i,k_i}')$ is determined by
        \begin{align}
            \alpha'=-\sqrt{-1}\beta, && \beta'=\sqrt{-1}\alpha.
        \end{align}
    \end{enumerate}
\end{prop}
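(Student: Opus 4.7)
\textbf{Plan for Part (\ref{item:shuffle_rel}), Shuffle.} Set $\widetilde\omega_i(\lambda) := f_i(\lambda)\omega_i(\lambda)$, which is a single (possibly logarithmic) $1$-form on $U$ along $\dch$. Each pairing on the left-hand side rewrites as
\begin{align*}
\left\langle[\bdeta_1;\omega_1|\cdots|\bdeta_r;\omega_r],[\bdsigma_1|\cdots|\bdsigma_r]\otimes\dch\right\rangle=\int_\dch\widetilde\omega_1\circ\cdots\circ\widetilde\omega_r,
\end{align*}
and analogously for the second factor. Applying the shuffle identity for regularized iterated integrals, \cref{prop:properties_of_regularized_iterated_integral}(\ref{item:shuffle_for_reg_iterated_integral}), to the product of two such integrals on the same path immediately produces the sum over $\sigma\in S_{r,s}$. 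Because each $\sigma$ acts on the composite $1$-forms $\widetilde\omega_i$ as atomic units, it simultaneously permutes the pairs $(\bdeta_i;\omega_i)$ and the tuples $\bdsigma_i$, yielding the stated formula with no additional work.

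\textbf{Plan for Part (\ref{item:dual_rel}), Duality.} The plan is to combine path inversion with a symmetry of the Legendre family. First, setting $\widetilde\omega_i = f_i\omega_i$ as above and applying \cref{prop:properties_of_regularized_iterated_integral}(\ref{item:inverse_for_reg_iterated_integral}), one obtains
\begin{align*}
\int_\dch\widetilde\omega_1\circ\cdots\circ\widetilde\omega_r=(-1)^r\int_{\dch^{-1}}\widetilde\omega_r\circ\cdots\circ\widetilde\omega_1.
\end{align*}
Next, introduce the holomorphic involution $\phi\colon S\to S$, $\phi(\lambda)=1-\lambda$, which sends the cuspidal path $\dch$ to its reverse $\dch^{-1}$. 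The involution $\phi$ lifts to a fibered isomorphism $\widetilde\phi\colon \cE\to\cE$ arising from the $S_3$-symmetry of the $j$-invariant and the resulting $E_\lambda\cong E_{1-\lambda}$. The induced action of $\widetilde\phi_*$ on $H_1(E_\lambda,\bQ(\sqrt{-1}))$ is to be computed explicitly; the claim is that it coincides with the involution $\alpha\mapsto -\sqrt{-1}\beta$, $\beta\mapsto\sqrt{-1}\alpha$ defining the prime operation. Pulling the integral $\int_{\dch^{-1}}$ back via $\phi$ and using the compatibility of $\widetilde\phi$ with the Gauss--Manin connection, the fiber factors $f_i(\lambda)=\prod_j\int_{\widetilde\sigma_{i,j}}\eta_{i,j}$ transform into $f'_i(\lambda)=\prod_j\int_{\widetilde{\sigma'_{i,j}}}\eta_{i,j}$, while the base factors and the sign $(-1)^r$ collapse against $\phi^*\chi_0$ and $\phi^*\chi_1$ to leave each $\omega_i$ intact.

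\textbf{Main obstacle.} The chief difficulty is that $\phi^*$ acts nontrivially on the base $1$-forms: $\phi^*\chi_0=-\chi_1$ and $\phi^*\chi_1=-\chi_0$, swapping the two generators with signs. The statement of the duality keeps $\omega_i$ unchanged on the right-hand side, so one must verify that, after pullback, all the induced swaps and signs---combined with the $\sqrt{-1}$ factors arising from $\widetilde\phi^*$ acting on the fiber forms $\theta_0,\theta_1$ and with the overall $(-1)^r$ from path inversion---conspire to absorb into a cycle-only transformation, without any residual change of $\omega_i$ or $\eta_{i,j}$. Carrying out this bookkeeping (presumably by a case analysis on $\omega_i\in\{\chi_0,\chi_1\}$, $\eta_{i,j}\in\{\theta_0,\theta_1\}$, possibly invoking Legendre-type relations for the periods $\widetilde K,\widetilde E$) is the main technical step and the principal obstacle to the proof.
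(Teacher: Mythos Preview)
Your approach to Part~(1) is identical to the paper's: a direct application of the shuffle identity for regularized iterated integrals to the composite $1$-forms $\widetilde\omega_i=f_i\omega_i$.

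For Part~(2), your strategy---path inversion followed by the substitution $\lambda\mapsto 1-\lambda$---is also the paper's. The difference is only in packaging: rather than lifting $\phi$ geometrically to $\cE$ and computing $\widetilde\phi_*$ on $H_1$, the paper invokes the explicit period identity
\[
g_i(1-\lambda)=\sqrt{-1}\,f_i(\lambda)\qquad(i=0,1),
\]
which is precisely the analytic incarnation of your $\widetilde\phi_*$. Indeed, this identity gives $-\sqrt{-1}\,g_i(\mu)=f_i(1-\mu)$ and $\sqrt{-1}\,f_i(\mu)=g_i(1-\mu)$, so replacing $\alpha$ by $-\sqrt{-1}\beta$ and $\beta$ by $\sqrt{-1}\alpha$ in the fiber integrals on the right exactly reproduces the fiber integrals on the left after the change of variable. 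No separate case analysis on $\eta_{i,j}\in\{\theta_0,\theta_1\}$ or appeal to Legendre-type relations is required; the single hypergeometric symmetry above does all the work.

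Regarding your ``main obstacle'' about the base forms: the paper's two-line proof does not address it. You are correct that $\phi^*\chi_0=-\chi_1$ and $\phi^*\chi_1=-\chi_0$; the $r$ resulting signs cancel the $(-1)^r$ from path inversion, but the swap $\chi_0\leftrightarrow\chi_1$ remains and is not reflected in the displayed identity, which keeps each $\omega_i$ literally unchanged. So you have been more scrupulous than the paper here rather than less; the residual discrepancy looks like a slip in the stated proposition (the $\omega_i$ on the right should presumably undergo $\chi_0\leftrightarrow\chi_1$) rather than a defect in your argument.
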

\begin{proof}
    (\ref{item:shuffle_rel}) is proved directly from (\ref{item:shuffle_for_reg_iterated_integral}) of \cref{prop:properties_of_regularized_iterated_integral}. (\ref{item:dual_rel}) is proved from (\ref{item:inverse_for_reg_iterated_integral}) of \cref{prop:properties_of_regularized_iterated_integral} and
    \begin{align}
        g_i(1-\lambda)=\sqrt{-1}f_i(\lambda)
    \end{align} 
    for $i=0,1$.
\end{proof}

\section{An analogue of \texorpdfstring{$\pi_1$}-de Rham theorem} \label{sec:main_theorem}

In this section, we give a strict statement and a proof of \cref{thm:pi1-deRham_family}.

\subsection{Preparation} \label{subsec:preparation}
First, we define the bar complex. Let $k$ be an algebraic field.

\begin{dfn} \label{dfn:bar_complex}
    For a dg-algebra $A=(A^\bullet,d)$ on $k$, we define the bar complex $B^\bullet(A)$ by
    \begin{align}
        B^p(A):=\bigoplus_{r\ge0}\bigoplus_{p_1+\cdots+p_r=p-r} A^{p_1}\otimes\cdots \otimes A^{p_r}.
    \end{align}
    The differential $d:B^p(A)\to B^{p+1}(A)$ is defined by
    \begin{align} \label{eq:diff_of_bar_cpx}
      \begin{split}
        d[a_1|\cdots|a_r]&:=\sum_{i=1}^{r-1}(-1)^{\nu_{i-1}+1}[a_1|\cdots|a_{i-1}|d a_i|a_{i+1}|\cdots |a_r]\\
        &+\sum_{i=1}^{r}(-1)^{\nu_{i}+1}[a_1|\cdots|a_{i-1}|a_i\wedge a_{i+1}|a_{i+2}|\cdots |a_r],
      \end{split}
    \end{align}
    where $\nu_i=p_1+\cdots+p_i-i$ for $a_i\in A^{p_i}$.
\end{dfn}

\begin{rem}
    The differential given by the \cref{eq:diff_of_bar_cpx} comes from the differential structure of iterated integral of general differential forms.
\end{rem}

The bar complex $B^\bullet(A)$ has the following extra structures.

\begin{enumerate}
    \item (Length filtration) We have an decreasing filtration $$L^{N}B^\bullet(A)\subseteq B^\bullet(A), $$ which is generated by elements $[a_1|\cdots|a_m]$ with $m\le -N$, which is commutative with the differential.
    \item (Product) We have a commutative product $\shu$ called the shuffle product defined by $$[a_1|\cdots|a_r]\shu[a_{r+1}|\cdots|a_{r+s}]:=\sum_{\sigma\in S_{r,s}} \epsilon(\sigma)[a_{\sigma^{-1}(1)}|\cdots|a_{\sigma^{-1}(r+s)}],$$ where $$S_{r,s}:=\left\{\sigma\in \mathcal{S}_{r+s}\,\middle|\,\begin{array}{c}\sigma(1)<\cdots<\sigma(r)\\\sigma(r+1)<\cdots<\sigma(r+s)\end{array}\right\}$$ and $\epsilon(\sigma)$ is the sign determined by $$a_1\wedge\cdots\wedge a_{r+s}=\epsilon(\sigma)a_{\sigma^{-1}(1)}\wedge\cdots\wedge a_{\sigma^{-1}(r+s)}.$$
    \item (Coproduct) We have a coproduct $\Delta$ defined by $$\Delta([a_1|\cdots|a_r]):=\sum_{i=0}^r[a_1|\cdots|a_i]\otimes[a_{i+1}|\cdots|a_r].$$
    \item (Antipode) We have an antipode defined by $$S([a_1|\cdots|a_r]):=(-1)^n\epsilon(\tau_r)[a_r|\cdots|a_1], $$ where $\tau_r$ is a permutation $\tau_r(i):=r-i$.
\end{enumerate}

\begin{lem}[{\cite[Lemma 3.261, p.249]{GiFr}}] \label{lem:Hopf_strucure_of_bar_complex}
    Let $A$ be a connected commutative dg-algebra, which means $A^0=k$ and $A^n=0$ if $n<0$. Then, the above operation induces a structure on $H^0(B^\bullet(A))$ as a Hopf algebra.
\end{lem}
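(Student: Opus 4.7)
The plan is to show that $B^\bullet(A)$ is already a differential graded Hopf algebra under the four operations listed, so that the Hopf algebra structure on $H^0(B^\bullet(A))$ is induced automatically. Since $A$ is connected, the element $1 \in A^0 = k$ provides a unit $\eta : k \to B^0(A)$ via the empty word $[\,]$, and projection onto the length-zero summand gives a counit $\varepsilon : B^\bullet(A) \to k$; both commute with $d$ by inspection of (\ref{eq:diff_of_bar_cpx}), and together with the coproduct and product they make $B^\bullet(A)$ into a counital augmented dg-coalgebra and a unital dg-algebra.

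The first substantive step is to verify that $\shu$, $\Delta$, and $S$ are chain maps. For $\Delta$ this is essentially immediate from the deconcatenation formula, because the two summands of $d$ in (\ref{eq:diff_of_bar_cpx}) (acting internally by $d a_i$ and by wedge multiplication $a_i \wedge a_{i+1}$) split cleanly across any cut point, one of them producing a boundary term in one tensor factor and the other in the neighbouring factor. For the antipode, $d \circ S = \pm S \circ d$ in the appropriate graded sense is a formal consequence of the derivation properties of $d$ combined with the sign $\epsilon(\tau_r)$. The derivation property for $\shu$ is more delicate: for each shuffle $\sigma$ contributing to $x \shu y$ and each position where $d$ acts, one must match it with a corresponding term in $dx \shu y$ or $(-1)^{|x|} x \shu dy$.

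Once the chain-level compatibilities are in place, the Hopf algebra axioms (associativity and graded commutativity of $\shu$, coassociativity of $\Delta$, the bialgebra compatibility $\Delta(x \shu y) = \Delta(x) \shu \Delta(y)$, and the antipode identity $\mu \circ (S \otimes \mathrm{id}) \circ \Delta = \eta \circ \varepsilon$) follow from standard combinatorial identities on shuffles. The commutativity hypothesis on $A$ enters precisely at the point of graded commutativity of $\shu$: transposing adjacent letters $a_i, a_j$ incurs a sign $(-1)^{|a_i||a_j|}$ that must agree with the shuffle sign $\epsilon(\sigma)$ in order for $x \shu y = y \shu x$ to hold. Passing to $H^0$ is then formal: induced maps on cohomology of a dg-Hopf algebra automatically satisfy all the axioms they satisfied on the nose at chain level.

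The main obstacle I anticipate is the sign bookkeeping in the Leibniz rule for $\shu$. The differential (\ref{eq:diff_of_bar_cpx}) contains two kinds of terms with two different sign conventions involving the $\nu_i$, and one must track how these signs interact with the shuffle signs $\epsilon(\sigma)$ across every position in which $d$ can act, including the boundary positions where an internal $d a_i$ term can coincide with a $a_{i-1} \wedge a_i$ term on the neighbouring shuffle. The rest of the verification is largely formal once one has set up a consistent sign convention; as this is a standard fact, I would ultimately cite \cite[Lemma 3.261]{GiFr} for the detailed sign calculation rather than reproduce it.
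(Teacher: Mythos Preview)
The paper does not give its own proof of this lemma: it is stated with a citation to \cite[Lemma 3.261, p.249]{GiFr} and used as a black box. Your proposal sketches the standard argument (verifying that $\shu$, $\Delta$, $S$ are chain maps and that the Hopf axioms hold combinatorially) and then defers the sign bookkeeping to the same reference, so your treatment is entirely consistent with, and somewhat more detailed than, what the paper itself does.
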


Now we extend the definition of the bar complex to sheaves.

\begin{dfn}
    For a complex of sheaves $\mathcal{F}^\bullet$ on a topological space $X$, the symbol $B^\bullet(\mathcal{F}^\bullet)$ is used to denote a bar complex of sheaves $U\rightsquigarrow B^\bullet(\mathcal{F}^\bullet(U))$.
\end{dfn}

In our setting, we consider the cohomology of the bar complex of sheaves for $\Sym^{\le M} \mathcal{H}^1(\cE|_U/U)\otimes_{\mathcal{O}_U}\Omega^\bullet_U$. Then, from \cref{lem:Hopf_strucure_of_bar_complex}, $H^0(B^\bullet(\Sym^{\le M} \mathcal{H}^1(\cE|_U/U)\otimes_{\mathcal{O}_U}\Omega^\bullet_U))$ has a Hopf algebra structure and a length filtration $L^N$, which is the left hand side of the \cref{eq:pi1-deRham_family} in \cref{thm:pi1-deRham_family}.

On the other hands, the right hand side of the \cref{eq:pi1-deRham_family} is written by the set of homotopy classes $\pi_1(U;a,b)$ for fixed $a,b\in U$.

\begin{dfn}
    For a topological space $T$ and two points $t_0,t_1\in T$, Let $\pi_1(T;t_0,t_1):=\{\gamma:[0,1]\to T\,|\,\text{piecewise smooth,} \gamma(0)=t_0,\gamma(1)=t_1\}/\text{(homotop)}$ the set of homotopy classes. Then, $\pi_1(T;t_0):=\pi_1(T;t_0,t_0)$ is a group structure with product determined by the path connection and $\pi_1(T;t_0,t_1)$ is a left $\pi_1(T;t_0)$-set by $\alpha\cdot\gamma:=\alpha\gamma$ for $\alpha\in\pi_1(T;t_0)$ and $\gamma\in\pi_1(T;t_0,T_1)$.
\end{dfn}

Then, we have a Hopf algebra structure on the right hand side of \cref{eq:pi1-deRham_family} if we fix an homotopy class $\gamma_0\in \pi_1(U;a,b)$ as follows. Note that
\begin{align}
    &\left((\Sym^{\le M} H_1(E_b,\bQ))^{\otimes \le N}\otimes_\bQ\bQ[\pi_1(U;a,b)]/J^{N+1}\bQ[\pi_1(U;a,b)]\otimes_\bQ\bC\right)^\vee\\
    &\cong (\Sym^{\le M} H^1(E_b,\bQ))^{\otimes \le N}\otimes_\bQ\left(\bQ[\pi_1(U;a,b)]/J^{N+1}\bQ[\pi_1(U;a,b)]\otimes_\bQ\bC\right)^\vee
\end{align}
since each $(\Sym^{\le M} H^1(E_b,\bQ))^{\otimes \le N}$ and $\bQ[\pi_1(U;a,b)]/J^{N+1}\bQ[\pi_1(U;a,b)]$ is finite dimensional. 
\begin{enumerate}
    \item (Product) It is defined diagonally by using the tensor product of $(\Sym^{\le M} H^1(E_b,\bQ))^{\otimes \le N}$ and the dual of the coproduct of $\bQ[\pi_1(U;a,b)]/J^{N+1}\bQ[\pi_1(U;a,b)]$, which is defined by $$\Delta(\gamma)=\gamma\otimes\gamma$$ for $\gamma\in \pi_1(U;a,b)$.
    \item (Coproduct) It is defined diagonally by using the coproduct of $(\Sym^{\le M} H^1(E_b,\bQ))^{\otimes \le N}$, which is defined by 
    \begin{align}
        &\Delta([\phi_{1,1}\cdots\phi_{1,k_1}|\cdots|\phi_{r,1}\cdots\phi_{r,k_r}])\\
        &:=\sum_{i=0}^r[\phi_{1,1}\cdots\phi_{1,k_1}|\cdots|\phi_{i,1}\cdots\phi_{i,k_r}]\otimes[\phi_{i+1,1}\cdots\phi_{i+1,k_{i+1}}|\cdots|\phi_{r,1}\cdots\phi_{r,k_r}],
    \end{align}
    and the dual of the product of $\bQ[\pi_1(U;a,b)]/J^{N+1}\bQ[\pi_1(U;a,b)]$.
    \item (Antipode) It is defined diagonally by using the antipode of $(\Sym^{\le M} H^1(E_b,\bQ))^{\otimes \le N}$, which is defined by $$\Delta([\phi_{1,1}\cdots\phi_{1,k_1}|\cdots|\phi_{r,1}\cdots\phi_{r,k_r}]):=(-1)^r[\phi_{r,1}\cdots\phi_{r,k_r}|\cdots|\phi_{1,1}\cdots\phi_{1,k_1}],$$ and the dual of the antipode of $\bQ[\pi_1(U;a,b)]/J^{N+1}\bQ[\pi_1(U;a,b)]$, which is defined by $\Delta(\gamma):=\gamma_0\gamma^{-1}\gamma_0$ for any $\gamma\in \pi_1(U;a,b)$.
\end{enumerate}

Now, we give the isomorphism of the \cref{eq:pi1-deRham_family}.

\begin{prop}
    For any $N,M\in\bZge{0}$ and $a,b\in U$, let
    \begin{align}
        \langle\ast,\ast\rangle: & L^{-N}\mathbb{H}^0(U, B^\bullet(\Sym^{\le M} \mathcal{H}^1(\cE|_U/U)\otimes_{\mathcal{O}_U}\Omega^\bullet_U))\\
        &\to
        \left((\Sym^{\le M} H_1(E_b,\bQ))^{\otimes \le N}\otimes_\bQ\bQ[\pi_1(U;a,b)]/J^{N+1}\bQ[\pi_1(U;a,b)]\otimes_\bQ\bC\right)^\vee
    \end{align}
    be the map determined by
    \begin{align} \label{eq:pairing}
        \left\langle[\bdeta_1;\omega_1|\cdots|\bdeta_r;\omega_r], [\widetilde{\bdsigma}_1|\cdots|\widetilde{\bdsigma}_r]\otimes\gamma\right\rangle
    \end{align}
    for $\eta_{i,j}\in \Gamma(U,\mathcal{H}^1(\cE|_U/U))$, $\sigma_{i,j}\in H_1(E_b,\bQ)$, $\omega_i\in H^1_\dR(U)$, and $\gamma:[0,1]\to U$. Then, the pairing is well-defined as a homomorphism of Hopf algebras.
\end{prop}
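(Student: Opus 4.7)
The plan is to verify well-definedness in three stages and then the Hopf-algebra compatibility. First I would confirm that formula \eqref{eq:pairing} makes sense at the chain level. The function $f_i(\lambda)=\prod_j\int_{\widetilde{\sigma}_{i,j}}\eta_{i,j}$ depends only on the homology classes of the $\sigma_{i,j}$ and on the homotopy class of $\gamma$ (since the lift $\widetilde{\sigma}_{i,j}$ is determined by exactly this data), and is manifestly symmetric in $j$, so the pairing factors through $\Sym^{k_i}H_1(E_b,\bQ)$ in each slot. The match of bar length $r$ with the tensor power $\otimes^r$ on the right is built into the definition, so the filtrations $L^{-N}$ and $\otimes^{\le N}$ correspond automatically.

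The decisive analytical step is descent to hypercohomology, i.e.\ that the pairing vanishes on coboundaries of $B^\bullet$. The key identity is
\begin{align}
  d_\lambda f_i(\lambda) = \sum_{j=1}^{k_i}\Bigl(\prod_{j'\neq j}\int_{\widetilde{\sigma}_{i,j'}}\eta_{i,j'}\Bigr)\int_{\widetilde{\sigma}_{i,j}}(d_\lambda \eta_{i,j}),
\end{align}
which uses the fact that the lifts $\widetilde{\sigma}_{i,j}$ are horizontal for the Gauss--Manin connection. This reproduces exactly the first piece of the bar differential on $[\eta_1\cdots\eta_k;\omega]$; the remaining $d\omega$- and wedge-type terms are handled by ordinary Stokes' theorem for iterated integrals of $1$-forms on $S$. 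Tracking the bar signs $\nu_i$ carefully, the integral of a coboundary collapses to a pure boundary term at the endpoints of $\gamma$, giving the desired descent.

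Well-definedness modulo $J^{N+1}\bQ[\pi_1(U;a,b)]$ reduces to the classical Chen argument: iterated integrals of length $\le N$ annihilate $J^{N+1}$, because the path concatenation formula \cref{prop:properties_of_iterated_integral} applied $N+1$ times to a product $(\alpha_1-1)\cdots(\alpha_{N+1}-1)$ forces at least one empty factor on which each augmentation element vanishes. The presence of the cycle functions $f_i$ does not obstruct this argument, since the horizontal lifts of the $\sigma_{i,j}$ are compatible with concatenation of the base path. Homotopy invariance in $\gamma$ then follows from the cohomological descent already established.

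Finally, for the Hopf algebra compatibility I would verify each structure separately. The shuffle product on $B^\bullet$ corresponds to the product on the right (tensor-diagonal on $\Sym$ combined with the dual of $\gamma\mapsto\gamma\otimes\gamma$ on $\bQ[\pi_1]$) via the shuffle relation in \cref{prop:properties_of_iterated_integral}; the bar coproduct matches the dual of the product in $\bQ[\pi_1(U;a,b)]$ via the path concatenation formula, applied jointly to the bar decomposition and to the factorization of the cycle products; and the antipode compatibility follows from the inverse-path formula together with $g_i(1-\lambda)=\sqrt{-1}f_i(\lambda)$. The main obstacle will be the Stokes/Gauss--Manin computation of the second step: coordinating the two pieces of the bar differential, horizontality of the cycle lifts, and the sign conventions $\nu_i$ so that the coboundary terms cancel all bulk contributions and leave only cancelling endpoint terms is the technical heart of the argument.
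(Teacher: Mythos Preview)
Your proposal is correct and in fact more complete than the paper's own argument. The paper's proof is very brief: it dispatches the Hopf algebra compatibility in a single sentence by appealing to \cref{prop:properties_of_iterated_integral}, and it handles well-definedness modulo $J^{N+1}\bQ[\pi_1(U;a,b)]$ by exactly the induction-on-$N$ argument you sketch in your third paragraph, writing an element of $J^{N+1}$ as $\alpha\gamma$ with $\alpha\in J$ and $\gamma\in J^N\bQ[\pi_1(U;a,b)]$ and using the path-concatenation formula. That is the entire content of the paper's proof.

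What you add---the explicit descent to $\mathbb{H}^0$ via the Gauss--Manin/Stokes computation identifying $d_\lambda f_i$ with the first part of the bar differential, and the check that the pairing factors through $\Sym$ in each slot---are points the paper leaves implicit. Your identification of this as the technical heart is accurate; the paper simply does not spell it out.

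One small correction: the identity $g_i(1-\lambda)=\sqrt{-1}f_i(\lambda)$ is not relevant to the antipode compatibility here. That relation is used elsewhere in the paper (for the ``dual relation'' on $\dch$), but the antipode on the Betti side is $\gamma\mapsto\gamma_0\gamma^{-1}\gamma_0$ for a fixed $\gamma_0\in\pi_1(U;a,b)$, and its compatibility with the bar-complex antipode follows from the inverse-path formula and the path-concatenation formula alone.
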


\begin{proof}
    The homomorphism property is clear from \cref{prop:properties_of_iterated_integral}. The well-definedness is shown by induction on $N$ as follows. The $N=0$ case can be proved from $\langle[],[]\otimes\gamma\rangle=1$ for any $\gamma\in\pi_1(U;a;b)$. For $N>0$, the element $J^{N+1}\bQ[\pi_1(U;a,b)]$ can be written as $\alpha\gamma$ for some $\alpha\in J$ and $\gamma\in J^N\bQ[\pi_1(U;a,b)]$. Then, the property \ref{item:path_connection} of \cref{prop:properties_of_iterated_integral} yields
    \begin{align}
        \langle w,\bdsigma\otimes\alpha\gamma\rangle=\sum_{w=w_1w_2} \langle w_1,\bdsigma_1\otimes\alpha\rangle\langle w_2,\bdsigma_2\otimes\gamma\rangle
    \end{align}
    for $w\in L^{-N}\mathbb{H}^0(U, B^\bullet(\Sym^{\le M} \mathcal{H}^1(\cE|_U/U)\otimes_{\mathcal{O}_U}\Omega^\bullet_U))$ and $\bdsigma\in (\Sym^{\le M} H_1(E_b,\bQ))^{\otimes \le N}$. Here, we separate $\bdsigma=\bdsigma_1\bdsigma_2$ so that $\bdsigma_i$ and $w_i$ have the same length. Since each terms vanish from the induction hypothesis, it hold $\langle w,\bdsigma\otimes\alpha\gamma\rangle=0$, which follows the conclusion.
\end{proof}

The main theorem, \cref{thm:pi1-deRham_family}, asserts that the pairing (\ref{eq:pairing}) is an isomorphism.

\begin{thm}[\cref{thm:pi1-deRham_family}] \label{thm:pi1-deRham_family_again}
    For each $M, N\in\bZge{0}$ and $a,b\in U$,
    \begin{align}
        \langle\ast,\ast\rangle: &L^{-N}\mathbb{H}^0(U, B^\bullet(\Sym^{\le M} \mathcal{H}^1(\cE|_U/U)\otimes_{\mathcal{O}_U}\Omega^\bullet_U))\\
        &\overset{\sim}{\longrightarrow}
        \left((\Sym^{\le M} H_1(E_b,\bQ))^{\otimes \le N}\otimes_\bQ\bQ[\pi_1(U;a,b)]/J^{N+1}\bQ[\pi_1(U;a,b)]\otimes_\bQ\bC\right)^\vee,
    \end{align}
    where $J\subseteq \mathbb{Q}[\pi_1(U;a)]$ is the augmentation ideal.
\end{thm}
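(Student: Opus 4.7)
The plan is to prove the pairing is an isomorphism by a double induction on the pair $(M, N)$, with the substantive content residing in the induction on the length $N$. The induction on $M$ is subordinate: the short exact sequence $0 \to \Sym^{\le M-1} \mathcal{H}^1 \to \Sym^{\le M} \mathcal{H}^1 \to \Sym^M \mathcal{H}^1 \to 0$ is compatible with the analogous filtration by symmetric weight on the right-hand side, so one can reduce to the pure-weight case, which is essentially a tensor-product statement over Chen's original theorem (\cref{thm:pi1-deRham_Chen_path}) applied to $U$. In particular, the $M = 0$ case recovers Chen's theorem directly.

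For the base case $N = 0$, both sides canonically reduce to $\bC$ (the unit of the bar complex on the left, the counit $\bQ[\pi_1(U;a,b)]/J \cong \bQ$ on the right), and the pairing is the identity. For the inductive step $N \geq 1$, I compare the short exact sequences coming from the length filtration on the left,
\begin{align*}
0 \to L^{-(N-1)} \mathbb{H}^0(B^\bullet) \to L^{-N} \mathbb{H}^0(B^\bullet) \to \operatorname{gr}^{-N} \mathbb{H}^0(B^\bullet) \to 0,
\end{align*}
with the dual of the $J$-adic filtration on the right, tensored with the appropriate $\Sym$-factor. By the induction hypothesis and the five lemma, it suffices to show the induced map on associated graded pieces is an isomorphism.

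To compute $\operatorname{gr}^{-N}$, I plan to use the cosimplicial resolution developed in \cref{app:cosimplicial_obj} to rewrite $\mathbb{H}^0(U, B^\bullet(\cdots))$ as the totalization of an explicit double complex. Since $U \subseteq \bP^1 \setminus \{0,1,\infty\}$ is Stein and $\Sym^{\le M} \mathcal{H}^1(\cE|_U/U)$ is locally free of finite rank over $\mathcal{O}_U$, higher coherent cohomology vanishes and the computation becomes finite-dimensional. Unpacking the bar differential on a length-$N$ generator $[a_1|\cdots|a_N]$ (with $a_i \in \Sym^{\le M} \mathcal{H}^1 \otimes_{\mathcal{O}_U} \Omega^1_U$) yields precisely the integrability relations that identify $\operatorname{gr}^{-N} \mathbb{H}^0$ with a quotient of $(\Sym^{\le M} H^1(E_b))^{\otimes N} \otimes H^1(U)^{\otimes N}$ by the shuffle relations. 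By Chen's classical theorem, this quotient is dual to $(\Sym^{\le M} H_1(E_b))^{\otimes N} \otimes J^N/J^{N+1}$, yielding the desired isomorphism on graded pieces.

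The main obstacle will be verifying the compatibility between the Gauss--Manin connection on $\mathcal{H}^1(\cE/U)$, which drives the bar differential on the $\Sym$ factor via $d_\lambda$, and the parallel transport of cycles along paths $\gamma \in \pi_1(U)$, which defines the lifts $\widetilde{\sigma}_{i,j}$ in the pairing. Concretely, one must systematically invoke the identity $\frac{d}{d\lambda}\int_{\widetilde{\sigma}} \eta = \int_{\widetilde{\sigma}} d_\lambda \eta$ through the bar combinatorics so that every differential in the bar complex is annihilated after pairing; equivalently, the boundary terms produced by Stokes' theorem in families must match the path-concatenation formula (\cref{prop:properties_of_iterated_integral}~(\ref{item:path_connection})) for iterated integrals. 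For the regularized version \cref{thm:pi1-deRham_family_reg}, this compatibility must additionally be established in the presence of logarithmic asymptotic expansions at tangential base points, which requires careful bookkeeping of the regularized limits appearing in \cref{prop:properties_of_regularized_iterated_integral}.
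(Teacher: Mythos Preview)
Your proposal has a genuine gap in the identification of the associated graded step, and this gap is where the real content of the theorem lives.

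On the Betti side, the cokernel of $(\mathcal{L}^{N-1})_b\hookrightarrow(\mathcal{L}^N)_b$ is the dual of
\[
V^{\otimes\le N-1}\otimes_\bQ J^N/J^{N+1}\;\oplus\;V^{\otimes N}\otimes_\bQ \bQ[\pi_1(U;a,b)]/J^{N+1},
\qquad V:=\Sym^{\le M}H_1(E_b,\bQ),
\]
not the single summand $V^{\otimes N}\otimes J^N/J^{N+1}$ you wrote. Advancing the length by one allows \emph{either} one more step in the $J$-adic filtration \emph{or} one more tensor factor of $V$, and the two are coupled; see the right column of diagram~\eqref{eq:diagram_of_kerf}.

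On the de Rham side, the length-$N$ graded piece of the bar complex is $\bigl(\Sym^{\le M}\mathcal{H}^1\otimes_{\mathcal{O}_U}\Omega^\bullet_U[1]\bigr)^{\otimes N}$ with the Gauss--Manin differential on each factor. Because $\Sym^{\le M}\mathcal{H}^1$ is a nontrivial local system (monodromy by symmetric powers of the standard $\mathrm{SL}_2$-representation), its hypercohomology on $U$ is \emph{not} the tensor product $(\Sym^{\le M}H^1(E_b))^{\otimes N}\otimes H^1(U)^{\otimes N}$; the $\Sym$-factors and the $H^1(U)$-factors are entangled by parallel transport. Your reduction ``tensor Chen's theorem with the $\Sym$ factor'' therefore does not go through, and for the same reason the induction on $M$ via $0\to\Sym^{\le M-1}\to\Sym^{\le M}\to\Sym^M\to0$ buys nothing: the pure-weight piece $\Sym^M$ is still a nontrivial local system. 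A related unjustified step is the right-exactness of your sequence on $\mathbb{H}^0$: surjectivity of $L^{-N}\mathbb{H}^0\to\operatorname{gr}^{-N}\mathbb{H}^0$ requires a vanishing that is itself part of the induction (in the paper's language, $R^i\varpi_\ast(\aK{N})=0$ for $i\le N-1$, Lemma~\ref{lem:about_varphi}(i)).

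The paper's route is structurally different and is designed to absorb exactly this monodromy. It first rewrites $L^{-N}\mathbb{H}^0$ as $\mathbb{H}^N(U^N,\aKb{N})$ for a Koszul-type complex on $U^N$ built from the diagonals (Lemmas~\ref{lem:barcpx_to_normalized}--\ref{lem:cosimplicial_to_K}), and then proves a Beilinson-type statement \emph{relatively}: letting $b$ vary, one obtains a morphism of local systems $R^N\varpi_\ast(\aK{N})\to\mathcal{L}^N$ on $U$, and the induction on $N$ is run sheaf-theoretically via the exact sequence~\eqref{eq:exact_seq_iota} and the Leray spectral sequence for $\varpi:U^{N,1}\to U$. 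The monodromy that blocks your tensor-product computation is precisely what is tracked by working with local systems on $U$ rather than with stalks at a fixed $b$.
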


The goal of the following subsections is to prove this theorem.

\subsection{Isomorphism of hypercohomology group}

To prove Theorem \cref{thm:pi1-deRham_family_again}, we first write the cohomology of the bar complex by hypercohomology of a certain complex of sheaves on $U^N$.

Let $\aUb^\bullet$ be a cosimplicial object with
\begin{itemize}
    \item (components) $\aUb^n:=\underbrace{U\times \cdots\times U}_{n}$.
    \item (coface map) $\delta^i:\aUb^n\to \aUb^{n+1}$ ($i=0,\dots,n$) is given by $$\delta^i(s_1,\dots,s_n):=\begin{cases}
        (a,s_1,\dots,s_n), & i=0,\\
        (s_1,\dots,s_i,s_i,\dots,s_n), & 0<i<n,\\
        (s_1,\dots,s_n,b), & i=n.
    \end{cases}$$
    \item (codegeneracy map) $\sigma^i:\aUb^{n+1}\to \aUb^n$ ($i=0,\dots,n$) is given by $$\sigma^i(s_1,\dots,s_{n+1}):=(s_1,\dots,s_i,s_{i+2},\dots,s_{n+1}).$$
\end{itemize}

Let $\Sym^{\le M}\mathcal{H}^\ast_\dR(\cE^\ast|_{U^\ast}/\aUb^\ast)\otimes\Omega_{\aUb^\ast}^\bullet$ be a cosimplicial dg-algebra given by $$\Sym^{\le M}\mathcal{H}^\ast_\dR(\cE^n|_{U^n}/\aUb^n)\otimes\Omega_{\aUb^n}^\bullet:=(\Sym^{\le M}\mathcal{H}^1_\dR(\cE|_U/U)\otimes\Omega^\bullet_U)^{\otimes n}.$$ Then, the bar complex can be written by the associated total complex of $\Sym^{\le M}\mathcal{H}^\ast_\dR(\cE^\ast|_{U^\ast}/\aUb^\ast)\otimes\Omega_{\aUb^\ast}^\bullet$, which is defined in \cref{app:cosimplicial_obj}.

\begin{lem} \label{lem:barcpx_to_normalized}
    The map 
    \begin{align}
        \psi:B^\bullet(\Sym^{\le M}\mathcal{H}^1_\dR(\cE|_U/U)\otimes\Omega_U^\bullet)\to \Tot \mathcal{N}(\Sym^{\le M}\mathcal{H}^\ast_\dR(\cE^\ast|_{U^\ast}/\aUb^\ast)\otimes\Omega_{\aUb^\ast}^\bullet)
    \end{align}
    given by
    \begin{align}
        \psi([\bdeta_1;\omega_1|\cdots|\bdeta_r\omega_r]):=(-1)^{\sum_{i=1}^r(r-i)\deg(\omega_i)}[\bdeta_1;\omega_1|\cdots|\bdeta_r\omega_r]
    \end{align}
    is an isomorphism of complexes of sheaves. In addition, it sends $L^{-N}B^\bullet(\Sym^{\le M}\mathcal{H}^1_\dR(\cE|_U/U)\otimes\Omega_U\bullet)$ to $\Tot\sigma_{\le N} \mathcal{N}(\Sym^{\le M}\mathcal{H}^\ast_\dR(\cE^\ast|_{U^\ast}/\aUb^\ast)\otimes\Omega_{\aUb^\ast}^\bullet)$.
\end{lem}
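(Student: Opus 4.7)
The plan is to verify that $\psi$ is a bijection in each bidegree, that it intertwines the two differentials up to the prescribed Koszul sign, and that it carries the length filtration to the cosimplicial truncation. Since $\psi$ is defined objectwise, the statement reduces to a computation for each open $U \subseteq S$, so I will treat it as a claim about complexes of $\bC$-vector spaces.

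First I would match the underlying graded objects. Writing $A := \Sym^{\le M}\mathcal{H}^1_\dR(\cE|_U/U)\otimes\Omega_U^\bullet$, the bar complex in degree $p$ decomposes as $\bigoplus_{r\ge 0} \bigoplus_{p_1+\cdots+p_r=p-r} A^{p_1}\otimes\cdots\otimes A^{p_r}$. On the other side, the cosimplicial dg-algebra has $A^{\otimes r}$ in cosimplicial degree $r$, and the normalized complex $\mathcal{N}^r$ identifies with the ``reduced'' tensor product; by construction of $\aUb^\bullet$ this reduced tensor product is just $A^{\otimes r}$ since no degeneracy acts non-trivially on our description. The total complex in degree $p$ then equals $\bigoplus_{r\ge 0}(A^{\otimes r})^{p-r}$, matching the bar complex summand by summand. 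Under this matching, $\psi$ is multiplication by $(-1)^{\sum_{i=1}^r (r-i)\deg(\omega_i)}$ on the length-$r$ piece, hence a bijection.

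Next I would verify that $\psi$ intertwines the differentials. The bar differential splits into an internal piece $\sum(-1)^{\nu_{i-1}+1}[a_1|\cdots|da_i|\cdots|a_r]$ and a multiplication piece $\sum(-1)^{\nu_i+1}[a_1|\cdots|a_i\wedge a_{i+1}|\cdots|a_r]$. The total differential on $\Tot\mathcal{N}$ splits as $D = d_{\text{int}}+\partial$ where $d_{\text{int}}$ is the internal differential on $A^{\otimes r}$ (a sum of $\id\otimes\cdots\otimes d\otimes\cdots\otimes\id$ with Koszul signs) and $\partial = \sum_i(-1)^i\delta^i$ is the alternating sum of cofaces. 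The geometric input is that for $0<i<r$, the map $\delta^i:\aUb^{r-1}\to\aUb^r$ repeats the $i$-th coordinate, so its contribution on the external tensor product of forms wedge-multiplies the $i$-th and $(i+1)$-th factors; the boundary cofaces $\delta^0,\delta^r$ evaluate at $a,b$ and will not contribute to a non-trivial cocycle on the normalized complex after the identification above (the positive-degree piece of $A$ at a point vanishes, and one checks by inspection on our $A$ that these terms drop out). Thus $\partial$ matches the multiplication part of $d$ and $d_{\text{int}}$ matches the internal part, provided the signs agree.

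The main obstacle, which is where the factor $(-1)^{\sum_{i=1}^r (r-i)\deg(\omega_i)}$ enters, is the reconciliation of sign conventions. The bar differential uses the shifted degree $\nu_i = p_1+\cdots+p_i-i$, while the total differential uses the unshifted Koszul signs on $A^{\otimes r}$ together with the cosimplicial sign $(-1)^i$. Applying $\psi$ and comparing term by term, one finds that the sign $(-1)^{\sum (r-i)\deg(\omega_i)}$ is precisely the discrepancy between ``all tensor entries shifted by $-1$'' and ``no shift'', so it absorbs the difference between $\nu_i+1$ on the bar side and the product of the internal Koszul sign with $(-1)^i$ on the total side. I would make this explicit by computing $D\circ\psi - \psi\circ d$ on a monomial $[\bdeta_1;\omega_1|\cdots|\bdeta_r;\omega_r]$ and checking the cancellation in each of the two types of terms. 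This is the only nontrivial sign computation; the remaining bookkeeping is routine.

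Finally, the length filtration $L^{-N}$ of the bar complex is defined by restricting to tensor length $\le N$, and under $\psi$ this corresponds exactly to the cosimplicial-degree truncation $\Tot\sigma_{\le N}\mathcal{N}$. This is immediate from the description of $\psi$ as the degreewise identity (up to sign) between length-$r$ elements and cosimplicial degree-$r$ elements, and completes the proof.
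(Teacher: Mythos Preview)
Your approach matches the paper's: both observe that the two sides coincide termwise as graded objects and that the sign factor in $\psi$ reconciles the two differentials. The paper's own proof is in fact a single sentence to this effect (``equal as sets by definition; the difference lies in the signs, which is resolved by $\psi$''), so your expansion into internal-versus-simplicial pieces of the differential and your reading of $(-1)^{\sum_i (r-i)\deg(\omega_i)}$ as the Koszul correction between shifted and unshifted tensor factors is a faithful elaboration of that one-line argument.

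One point to tighten: your assertions that ``no degeneracy acts non-trivially'' (hence $\mathcal{N}^r = A^{\otimes r}$) and that the extreme cofaces $\delta^0,\delta^r$ drop out deserve more care about variance. The cosimplicial space $\aUb^\bullet$ has cofaces raising $n$, but passing to differential forms is contravariant, so one must be explicit about whether the resulting algebraic object is cosimplicial in the sense of the appendix before invoking its $\mathcal{N}$-construction and its $\partial = \sum (-1)^i\delta^i$. The paper does not spell this out either, folding everything into ``by definition,'' so you are not missing an idea---only the bookkeeping of which way the structure maps go.
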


\begin{proof}
    By definition, we have the equation
    \begin{align}
        B^\bullet(\Sym^{\le M}\mathcal{H}^1_\dR(\cE|_U/U)\otimes\Omega_U^\bullet)= \Tot \mathcal{N}(\Sym^{\le M}\mathcal{H}^\ast_\dR(\cE^\ast|_{U^\ast}/\aUb^\ast)\otimes\Omega_{\aUb^\ast}^\bullet)
    \end{align}
    as sets. The difference lies in the signs of the differential maps, which is resolved by $\psi$.
\end{proof}

Secondary, by using the Riemann--Hilbert correspondence
\begin{align}
    \mathcal{H}^i_\dR(\cE^n|_{U^n}/U^n)\cong R^i\widetilde{\pi}_\ast \locQ_{U^n}\otimes\mathcal{O}_{U^n}
\end{align}
for $\widetilde{\pi}=\underbrace{\pi\times\cdots\times\pi}_{n}:\cE^n|_{U^n}\to U^n$ and the de Rham theorem
\begin{align}
    \mathbb{H}^\bullet(U^n,\Omega_{U^n})\cong H^\bullet(U^n,\bQ)\otimes\bC,
\end{align}
we have the following lemma.

\begin{lem} \label{lem:deRham_to_Betti}
    There is a quasi-isomorphism
    \begin{align}
        \Sym^{\le M}\mathcal{H}^\ast_\dR(\cE^\ast|_{U^\ast}/\aUb^\ast)\otimes\Omega_{\aUb^\ast}^\bullet\to \Sym^{\le M} R^\ast\widetilde{\pi}_\ast\locQ_{\aUb^\ast}\otimes\locQ_{\aUb^\ast}\otimes\bC.
    \end{align}
\end{lem}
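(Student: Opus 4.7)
The plan is to construct the quasi-isomorphism levelwise on the cosimplicial object $\aUb^\bullet$ by chaining together the Riemann–Hilbert correspondence and the Poincaré lemma, then to invoke exactness of $\Sym^{\le M}$ in characteristic zero. Fix $n \ge 0$ and work on $\aUb^n = U^n$. The Riemann–Hilbert correspondence identifies the $\mathcal{O}_{U^n}$-module $\mathcal{H}^i_\dR(\cE^n|_{U^n}/U^n)$ with $R^i\widetilde{\pi}_\ast\locQ_{U^n}\otimes_\bQ\mathcal{O}_{U^n}$, carrying the Gauss–Manin connection to the canonical connection $1\otimes d$. Tensoring with $\Omega^\bullet_{U^n}$ over $\mathcal{O}_{U^n}$, the complex $\mathcal{H}^\ast_\dR\otimes\Omega^\bullet_{U^n}$ becomes identified with the de Rham complex with coefficients in the flat bundle $R^\ast\widetilde{\pi}_\ast\locQ_{U^n}\otimes_\bQ\mathcal{O}_{U^n}$, namely $R^\ast\widetilde{\pi}_\ast\locQ_{U^n}\otimes_\bQ\Omega^\bullet_{U^n}$.

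The Poincaré lemma for local systems then supplies a quasi-isomorphism of complexes of sheaves
$$R^\ast\widetilde{\pi}_\ast\locQ_{U^n}\otimes_\bQ\locQ_{U^n}\otimes_\bQ\bC\;\hookrightarrow\; R^\ast\widetilde{\pi}_\ast\locQ_{U^n}\otimes_\bQ\Omega^\bullet_{U^n}$$
via the inclusion of constants in degree zero. Composing with the Riemann–Hilbert identification (and inverting the Poincaré inclusion in the derived category) yields a quasi-isomorphism in the requested direction at level $n$. Applying the functor $\Sym^{\le M}$, which decomposes as $S_n$-invariants in tensor powers and is therefore exact on complexes of sheaves of $\bQ$-vector spaces in characteristic zero, preserves this quasi-isomorphism and gives the desired levelwise quasi-isomorphism of dg-algebras.

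It remains to check naturality in the cosimplicial direction. The coface maps $\delta^i$ and codegeneracy maps $\sigma^i$ are product inclusions and projections, and both the Riemann–Hilbert correspondence and the Poincaré lemma are natural with respect to pullback along such morphisms. Moreover, the dg-algebra structures on both sides—wedge product of forms together with symmetric product on cohomology (respectively on cycles)—are compatible under these identifications, since Riemann–Hilbert is a tensor equivalence and inclusion of constants is a map of algebras. Hence the levelwise quasi-isomorphisms assemble into a quasi-isomorphism of cosimplicial dg-algebras.

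The main obstacle I anticipate is reconciling the direction of the arrow: the Poincaré lemma naturally gives $\locQ\otimes\bC\hookrightarrow\Omega^\bullet$, while the statement asks for a map in the opposite direction. One resolution is to interpret the claim as an isomorphism in the derived category (realized by a zig-zag of honest morphisms); alternatively, one can construct an explicit left inverse levelwise by passing through a soft resolution, for example the sheaf of $C^\infty$-forms together with a Poincaré chain-homotopy. A secondary, more bookkeeping subtlety is checking that under Riemann–Hilbert the Gauss–Manin differential on $\Sym^{\le M}\mathcal{H}^\ast_\dR\otimes\Omega^\bullet$ corresponds exactly to the twisted de Rham differential on the local-system side, so that the algebra structure and the differential are simultaneously preserved before taking $\Sym^{\le M}$.
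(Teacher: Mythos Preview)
Your approach is essentially the same as the paper's: the lemma is stated there without a formal proof, with the preceding sentence simply invoking the Riemann--Hilbert correspondence $\mathcal{H}^i_\dR(\cE^n|_{U^n}/U^n)\cong R^i\widetilde{\pi}_\ast \locQ_{U^n}\otimes\mathcal{O}_{U^n}$ and the de Rham theorem $\mathbb{H}^\bullet(U^n,\Omega_{U^n})\cong H^\bullet(U^n,\bQ)\otimes\bC$ as justification. Your elaboration via the Poincar\'e lemma for local systems, exactness of $\Sym^{\le M}$ in characteristic zero, and cosimplicial naturality is exactly the expected unpacking of that sketch; the direction-of-arrow worry is benign, since the statement is used only to identify hypercohomology groups, so a zig-zag (or derived-category isomorphism) suffices.
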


Next, by using \cref{lem:cosimplicial_vs_normalized} for cosimplicial object $\Sym^{\le M} R^\ast\widetilde{\pi}_\ast\locQ_{\aUb^\ast}\otimes\locQ_{\aUb^\ast}$, we have the following lemma.

\begin{lem} \label{lem:normalized_to_cosimplicial}
    We have a functorially homotopically equivalence
    \begin{align}
        \sigma_{\le N}\mathcal{N}\Sym^{\le M} R^\ast\widetilde{\pi}_\ast\locQ_{\aUb^\ast}\otimes\locQ_{\aUb^\ast}\to C^\bullet(I_N,\Sym^{\le M} R^\ast\widetilde{\pi}_\ast\locQ_{\aUb^\ast}\otimes\locQ_{\aUb^\ast}),
    \end{align}
    where $C^\bullet(I_N,\Sym^{\le M} R^\ast\widetilde{\pi}_\ast\locQ_{\aUb^\ast}\otimes\locQ_{\aUb^\ast})$ is a complex of sheaves defined in \cref{app:cosimplicial_obj}.
\end{lem}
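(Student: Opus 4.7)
The plan is to obtain this as a direct specialization of the general cosimplicial-vs-normalized comparison \cref{lem:cosimplicial_vs_normalized} proved in the appendix. First I verify that $\Sym^{\le M} R^\ast\widetilde{\pi}_\ast\locQ_{\aUb^\ast}\otimes\locQ_{\aUb^\ast}$ is indeed a cosimplicial object in an additive category to which that lemma applies. Each component is a sheaf of $\mathbb{Q}$-vector spaces on the product $\aUb^n=U^n$, and the coface/codegeneracy operators are induced from those of the cosimplicial space $\aUb^\bullet$ via the evident functoriality of $R^\ast\widetilde{\pi}_\ast$, of the tensor product, and of $\Sym^{\le M}$. All of these operations keep us inside the additive category of sheaves of $\mathbb{Q}$-vector spaces on $\aUb^n$, so the cosimplicial structure is well-defined there.

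Next I observe that the homotopy equivalence produced by \cref{lem:cosimplicial_vs_normalized} is given by universal cosimplicial formulas, namely $\mathbb{Q}$-linear (signed) combinations of compositions of coface and codegeneracy operators, together with a similarly universal homotopy. In particular, the construction is functorial in the cosimplicial object and commutes with any additive functor. Applying it termwise to our cosimplicial sheaf complex yields a morphism $\sigma_{\le N}\mathcal{N}(-)\to C^\bullet(I_N,-)$ of complexes of sheaves, together with the inverse and the homotopies, all assembled out of sheaf maps. Sheafwise the homotopy equivalence property is exactly \cref{lem:cosimplicial_vs_normalized}, so it holds as a homotopy equivalence of complexes of sheaves.

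The only genuine point to verify is that the truncation functor $\sigma_{\le N}$ and the target complex $C^\bullet(I_N,-)$, as defined in \cref{app:cosimplicial_obj}, are manifestly compatible with the sheafified setup; this is essentially formal since both involve only finite direct sums indexed by faces. The main (mild) obstacle is therefore not conceptual but notational: carefully translating the appendix statement from an abstract additive category into this sheaf setting, in particular checking that the universal formulas for the comparison maps and the chain homotopy genuinely define morphisms between complexes of sheaves on $U^n$. Once this bookkeeping is done, the lemma follows by direct invocation of \cref{lem:cosimplicial_vs_normalized}.
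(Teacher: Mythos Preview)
Your approach is correct and matches the paper's own: the lemma is obtained by directly applying \cref{lem:cosimplicial_vs_normalized} to the cosimplicial object $\Sym^{\le M} R^\ast\widetilde{\pi}_\ast\locQ_{\aUb^\ast}\otimes\locQ_{\aUb^\ast}$, and the paper says nothing more than this. Your additional remarks about checking the cosimplicial structure and the sheaf-level compatibility are reasonable elaborations but not required beyond what the paper itself provides.
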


We now consider a new complex of sheaves. First, we give more general definition, which is also used in \cref{subsec:Beilinson}. Let $Y_0,\dots,Y_N\subseteq U$ be closed subset. We write $Y=Y_0\cup\cdots\cup Y_N$. 

\begin{notn}
    The following notation will be used.
    \begin{itemize}
        \item We write $\binom{I_N}{l}:=\{I\subseteq I_N\,|\,\# I=l\}$ for $l=0,\dots,N$.
        \item For $I\subseteq I_N$, we write $I^c:=I_N\setminus I$.
        \item For $I\subseteq I_N$, we write $Y_I:=\bigcap_{i\in I}Y_i$.
        \item For $I\subseteq K\subseteq I_N$, we denote by $$d_{K,I}:\Sym R^{N-\# I}\widetilde{\pi}_\ast\locQ_{U^N}|_{Y_I} \otimes \locQ_{Y_I} \to \Sym R^{N-\# K}\widetilde{\pi}_\ast\locQ_{U^N}|_{Y_K} \otimes \locQ_{Y_K}$$ the restriction map induced by the inclusion $Y_K\subseteq Y_I$.
        \item For $K=\{i_0,\dots,i_l\}\subseteq I_N$ with $i_0<\cdots<i_l$ and $I=K\setminus\{i_j\}$ ($j=0,\dots,l$), we denote $\epsilon(I,K):=(-1)^j$. We also write $\epsilon(K):=\prod_{i\in K}(-1)^i$.
    \end{itemize}
\end{notn}

\begin{dfn}
    For $0\le l\le N$, we define a morphism of sheaves
    \begin{align}
        d:\bigoplus_{I\in \binom{I_N}{l}} \Sym R^{N-\# I}\widetilde{\pi}_\ast\locQ_{U^N}|_{Y_I} \otimes \locQ_{Y_I} \to \bigoplus_{I\in \binom{I_N}{l+1}} \Sym R^{N-\# K}\widetilde{\pi}_\ast\locQ_{U^N}|_{Y_K} \otimes \locQ_{Y_I}
    \end{align}
    as $$d=\bigoplus_{\substack{I\in\binom{I_N}{l}, K\in\binom{I_N}{l+1}\\I\subseteq K}} \epsilon(I,K)d_{K,I}.$$ We denote by $K(X;Y_0,\dots,Y_N)$ the complex of sheaves
    \begin{align}
        0\to \bigoplus_{I\in \binom{I_N}{0}} \Sym R^{N-\# I}\widetilde{\pi}_\ast\locQ_{U^N}|_{Y_I} \otimes \locQ_{Y_I} &\overset{d}{\longrightarrow} \bigoplus_{I\in \binom{I_N}{1}} \Sym R^{N-\# I}\widetilde{\pi}_\ast\locQ_{U^N}|_{Y_I} \otimes \locQ_{Y_I} \overset{d}{\longrightarrow}\\
         \cdots & \overset{d}{\longrightarrow} \bigoplus_{I\in \binom{I_N}{N}} \Sym R^{N-\# I}\widetilde{\pi}_\ast\locQ_{U^N}|_{Y_I} \otimes \locQ_{Y_I} \to 0.
    \end{align}
    We also define the complex of sheaves $\widetilde{K}(X;Y_0,\dots,Y_N)$ as
        \begin{align}
        0\to \bigoplus_{I\in \binom{I_N}{0}} \Sym R^{N-\# I}\widetilde{\pi}_\ast\locQ_{U^N}|_{Y_I} \otimes \locQ_{Y_I} &\overset{d}{\longrightarrow} \bigoplus_{I\in \binom{I_N}{1}} \Sym R^{N-\# I}\widetilde{\pi}_\ast\locQ_{U^N}|_{Y_I} \otimes \locQ_{Y_I} \overset{d}{\longrightarrow}\\
        \cdots & \overset{d}{\longrightarrow} \bigoplus_{I\in \binom{I_N}{N+1}} \Sym R^{N-\# I}\widetilde{\pi}_\ast\locQ_{U^N}|_{Y_I} \otimes \locQ_{Y_I} \to 0.
    \end{align}
\end{dfn}

We now specialize in our setting: $$Y_i:=\begin{cases}
    \{(s_1,\dots,s_N)\in U^N\,|\,s_1=a\}, & i=0,\\
    \{(s_1,\dots,s_N)\in U^N\,|\,s_i=s_{i+1}\}, & 0<i<N,\\
    \{(s_1,\dots,s_N)\in U^N\,|\,s_N=b\}, & i=N.
\end{cases}$$ We just write
\begin{align}
    \aKb{N}&:=K(U;Y_0,\dots,Y_N),\\
    \atKb{N}&:=\widetilde{K}(U;Y_0,\dots,Y_N).
\end{align}

Then we have the following lemma.
\begin{lem} \label{lem:cosimplicial_to_K}
    There exists a functorial isomorphism
    \begin{align}
        C^\bullet(I_N,\Sym^{\le M} R^\ast\widetilde{\pi}_\ast\locQ_{\aUb^\ast}\otimes\locQ_{\aUb^\ast}) \to \aKb{N} [N].
    \end{align}
\end{lem}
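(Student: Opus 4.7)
The plan is to build the isomorphism by matching the terms of $C^\bullet(I_N, \Sym^{\le M} R^\ast\widetilde{\pi}_\ast\locQ_{\aUb^\ast}\otimes\locQ_{\aUb^\ast})$ and of $\aKb{N}[N]$ term by term, indexed by subsets of $I_N$, and then checking that the coboundary on the cosimplicial side translates into the restriction-map differential on the \v{C}ech-type side. The geometric input is the observation that $Y_K = \bigcap_{i \in K} Y_i$ is, for every $K \subseteq I_N$ of size $l$, precisely the image in $U^N$ of the iterated coface map $\delta^K: \aUb^{N-l} \hookrightarrow \aUb^N$ of the cosimplicial object $\aUb^\ast$ (where, for $K = \{k_1 < \cdots < k_l\}$, one composes the $\delta^{k_j}$ in the standard simplicial order). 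This fixes the subset-indexing on both sides.

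The first step is to unpack from \cref{app:cosimplicial_obj} the definition of $C^\bullet(I_N, F^\ast)$, which decomposes as a direct sum indexed by $K \subseteq I_N$ of summands obtained by transporting the components $F^{N-\#K}$ along the iterated coface maps. For the specific $F^\ast = \Sym^{\le M} R^\ast\widetilde{\pi}_\ast\locQ_{\aUb^\ast}\otimes\locQ_{\aUb^\ast}$, pulling $\widetilde{\pi}: \cE^N|_{U^N} \to U^N$ back along $\delta^K: U^{N-l} \cong Y_K \hookrightarrow U^N$ gives (via Künneth, after the constant factors $E_a, E_b$ and the diagonal factors over $s_i = s_{i+1}$ are absorbed into the corresponding fibers) a canonical identification of the $K$-summand with $\Sym R^{N-\#K}\widetilde{\pi}_\ast\locQ_{U^N}|_{Y_K}\otimes\locQ_{Y_K}$. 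Summing over $K \in \binom{I_N}{l}$ produces exactly the term of $\aKb{N}[N]$ in degree $l - N$.

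The second step is to identify the differentials. By construction, the cosimplicial coboundary on $C^\bullet(I_N, F^\ast)$ is the alternating sum over index-insertions $K \rightsquigarrow K \cup \{i\}$, with sign determined by the position of $i$ in the enlarged subset. Under the identification above, each such insertion corresponds precisely to the restriction map $d_{K \cup \{i\}, K}$ from the \'etale-opened $Y_K$ to the smaller closed subset $Y_{K \cup \{i\}}$, and the position-based sign matches $\epsilon(K, K \cup \{i\}) = (-1)^j$ appearing in the definition of the differential of $\aKb{N}$. Functoriality in $F^\ast$ is then immediate from the construction.

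The main obstacle I anticipate is the sign bookkeeping: making the Koszul signs from the total complex, the alternating signs of the cosimplicial coboundary, and the signs $\epsilon(I,K)$ of $\aKb{N}$ all compatible, and confirming that the degree shift $[N]$ aligns with the cosimplicial-degree convention fixed in \cref{app:cosimplicial_obj}. Once these signs and degrees are aligned, the isomorphism is essentially the tautological translation between iterated coface embeddings and restriction maps to intersections of the $Y_i$.
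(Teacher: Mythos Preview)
Your approach is essentially the same as the paper's: both identify the summands via the geometric fact that the intersections $Y_K$ are precisely the images of the iterated coface maps of the cosimplicial space $\aUb^\ast$, and then verify that the cosimplicial coboundaries match the restriction maps $d_{K,I}$ with the signs $\epsilon(I,K)$. The only cosmetic difference is the indexing convention: the paper labels the summands of $C^\bullet(I_N,-)$ by $I$ and sends the $I$-summand to the $I^c$-summand of $\aKb{N}$ via the identification $\aUb^I \cong Y_{I^c}$, whereas you have already absorbed this complementation into your index $K$ (so your ``$K$-summand $F^{N-\#K}$'' corresponds to the paper's $I$-summand with $I=K^c$); your K\"unneth remark is extra detail that the paper suppresses.
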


\begin{proof}
    For $I\subseteq I_N$, we denote by
    \begin{align}
        f_I: C^\ast(\aUb^I,\bQ) \overset{\sim}{\longrightarrow} \Sym R^\ast\widetilde{\pi}_\ast\locQ_{U^N}|_{Y_{I^c}} \otimes \locQ_{Y_{I^c}}
    \end{align}
    the isomorphism of sheaves induced from $\aUb^I=Y_{I^c}$. Then, the map
    \begin{align}
        C^\bullet(I_N,\Sym^{\le M} R^\ast\widetilde{\pi}_\ast\locQ_{\aUb^\ast}\otimes\locQ_{\aUb^\ast}) \to \aKb{N} [N]
    \end{align}
    is determined such that the direct sum components of the map are $f_I$. The compatibility with their differential maps is proved from the commutative diagram
    \begin{align}
        \xymatrix {
            \aUb^I \ar[r]^{\delta_{I,K}} \ar@{=}[d] & \aUb^K \ar@{=}[d]\\
            Y_{I^c} \ar@{^{(}->}[r] & Y_{K^c}
        }
    \end{align}
    for $K=\{i_0,\dots,i_l\}\supseteq K\setminus\{i_j\}= I$ and 
    \begin{align}
        \delta_{I,K}(x_0,\dots,x_l):=(x_0,\dots,\widehat{x_{i}},x_l).
    \end{align}
\end{proof}

Combining \cref{lem:barcpx_to_normalized,lem:deRham_to_Betti,lem:normalized_to_cosimplicial,lem:cosimplicial_to_K}, we obtain the following isomorphism
\begin{align} \label{eq:first_step}
    L^{-N}\mathbb{H}^0(B^\bullet(U,\Sym^{\le M} \mathcal{H}^1(\cE|_U/U)\otimes_{\mathcal{O}_U}\Omega^\bullet_U))\cong \mathbb{H}^N(U^N,\aKb{N})\otimes \bC.
\end{align}

Thus, to show \cref{thm:pi1-deRham_family_again}, it is sufficient to show 
\begin{align} \label{eq:second_step}
    \mathbb{H}^N(U^N,\aKb{N})\cong \left((\Sym^{\le M} H_1(E_b,\bQ))^{\otimes \le N}\otimes_\bQ\bQ[\pi_1(U;a,b)]/J^{N+1}\bQ[\pi_1(U;a,b)]\right)^\vee.
\end{align}

\begin{rem}
    Beilinson showed that for a manifold $M$, $x,y\in M$, and any $N\in\bZge{0}$,
    \begin{align}
        \mathbb{H}^N(M^N,K(M^N;Y_0,\dots,Y_N))\cong (\bQ[\pi_1(M;x,y)]/J^{N+1}\bQ[\pi_1(M;x,y)])^\vee,
    \end{align}
    where $Y_i:=\{(x_1,\dots,x_N)\in S^N \,|\, x_i=x_{i+1}\}$ ($x_0:=x, x_{N+1}:=y$) (see \cite[Theorem 3.307, p.264]{GiFr}). Hence, we will discuss an analogue of this result in the case of the Legendre family in the next section.
\end{rem}

\subsection{Beilinson's theorem for Legendre family} \label{subsec:Beilinson}
We now prove the \cref{eq:second_step}. First, we give a homomorphism
\begin{align}
    \varphi_b: \mathbb{H}^N(U^N,\aKb{N})\to \left((\Sym^{\le M} H_1(E_b,\bQ))^{\otimes \le N}\otimes_\bQ\bQ[\pi_1(U;a,b)]/J^{N+1}\bQ[\pi_1(U;a,b)]\right)^\vee.
\end{align}

For $(\eta,\omega)=(\eta_I,\omega_I)_I\in \mathbb{H}^N(U^N,\aKb{N})$ ($\eta_I \in \Gamma(U,R^{N-\# I}\widetilde{\pi}_\ast\locQ_{Y_I})$, $\omega_I\in H^{N-\# I}(Y_I,\bQ)$), $\bdsigma\in (\Sym^{\le M} H_1(E_b,\bQ))^{\otimes \le N}$, and $\gamma\in\pi_1(U;a,b)$, $\eta_I(\widetilde{\bdsigma})\omega_I$ is a $(N-\# I)$-cocycle on $Y_I$. Then, we define 
\begin{align}
    \varphi_b(\eta,\omega)(\bdsigma\otimes\gamma):=\sum_{I\subseteq I_N}(-1)^\epsilon\langle \eta_I(\widetilde{\bdsigma})\omega_I, \sigma_{q,\gamma}^{N,I}\rangle \in \bQ,
\end{align}
where
\begin{align}
    \epsilon:=(-1)^{\binom{N-\# I}{2}}(-1)^{N\# I^c}\epsilon(I^c)
\end{align}
and $\sigma_{b,\gamma}^{N,I}$ is a $(N-\# I)$-cycle on $Y_I\cong U^{N-\# I}$ determined by
\begin{align}
    \sigma_{b,\gamma}^{N,I}(t_1,\dots,t_{N-\# I}):=(\gamma(t_1),\dots,\gamma(t_{N-\# I})).
\end{align}

The goal is to show that $\varphi_b$ is isomorphic. To prove this, we consider a relative version of $\varphi_b$ in which $a$ is fixed and $b$ is varied. We consider $U^{N,1}:=U^N\times U$ over $U$ with a projection $\varpi: U^{N,1}\to U$ to the last component. For $i=0,\dots,N$, we define the closed subset
\begin{align}
    Z_i:=\{(s_1,\dots,s_N,s_{N+1})\in U^{N,1} \,|\, s_i=s_{i+1}\},
\end{align}
where $s_0:=a$. We also white $Z_I:=\bigcap_{i\in I}Z_i$. We now consider the complex of sheaves
\begin{align}
    \aK{N}:=K(U^{N,1};Z_0,\dots,Z_N),\\
    \atK{N}:=\widetilde{K}(U^{N,1};Z_0,\dots,Z_N)
\end{align}
on $S^{N,1}$, which are relative versions of $\aKb{N}$ and $\atKb{N}$; that is,
\begin{align}
    \aK{N}|_{\varpi^{-1}(b)}=\aKb{K},\\
    \atK{N}|_{\varpi^{-1}(b)}=\atKb{K},
\end{align}
respectively.

Let $\iota_b:U^{N-1}\to U^N$ be a map $$\iota(s_1,\dots,s_{N-1}):=(s_1,\dots,s_{N-1},b).$$
Then, for each $N\ge1$, we have an exact sequence of complexes
\begin{align}\label{eq:exact_seq_iota}
    0\to (\iota_b)_\ast \aKb{N-1}[-1] \to \aKb{N} \to \atK{N-1} \to 0,
\end{align}
which is induced by the exact sequence of sheaves
\begin{align}
    0 \to \bigoplus_{I\in\binom{I_{N-1}}{l-1}} \Sym R^{N-\# I - 1}\widetilde{\pi}_\ast\locQ_{U^N}|_{Y_{I\cup \{N\}}} \otimes \locQ_{Y_{I\cup\{N\}}} &\to \bigoplus_{I\in \binom{I_N}{l}}\Sym R^{N-\# I}\widetilde{\pi}_\ast\locQ_{U^N}|_{Y_I} \otimes \locQ_{Y_I}\\
    &\to \bigoplus_{I\in \binom{I_{N-1}}{l}}\Sym R^{N-\# I}\widetilde{\pi}_\ast\locQ_{U^N}|_{Z_I} \otimes \locQ_{Z_I} \to 0.   
\end{align}
We also have an exact sequence
\begin{align} \label{eq:exact_seq_pKb_and_ptKb}
    0 \to \locQ_{(\underbrace{a,\dots,a}_{N})}[-N] \to \atK{N-1} \to \aK{N-1} \to 0,
\end{align}
where $\locQ_{(\underbrace{a,\dots,a}_{N})}$ is the skyscraper sheaf at $(a,\dots,a)\in U^{N-1,1}$.

Next, we define a relative version of $\mathbb{Q}[\pi_1(U;a,b)]$, which is a local system determined by the representation $\rho: \pi_1(U;a) \curvearrowright \mathbb{Q}[\pi_1(U;a,b)]$ defined by $\rho(\alpha)(\gamma):=\alpha\gamma$. This local system is denoted by $\mathbb{Q}[\pi_1(U;a,\bullet)]$. Then, the right hand side of the \cref{eq:second_step} is the stalk of the local system
\begin{align}
    \left((\Sym^{\le M} R_1\pi_\ast\locQ_U)^{\otimes \le N}\otimes_\bQ\bQ[\pi_1(U;a,\bullet)]/J^{N+1}\bQ[\pi_1(U;a,\bullet)]\right)^\vee
\end{align}
at $b$. We simply write the above local system as $\mathcal{L}^N$.

\begin{lem}
    There exists a morphism of local systems
    \begin{align}
        \varphi: R^N\varpi_\ast(\aK{N}) \to \mathcal{L}^N
    \end{align}
    such that the induced map between their stalks is
    \begin{align}
        \varphi_b: \mathbb{H}^N(U^N,\aKb{N})\to (\mathcal{L}^N)_b.
    \end{align}
\end{lem}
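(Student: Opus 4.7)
The plan splits into two parts. First, I would check that $R^N\varpi_\ast(\aK{N})$ is a local system on $U$, so that the target of $\varphi$ makes sense; second, I would take $\varphi$ to be stalkwise equal to the maps $\varphi_b$ already given, and verify compatibility with the monodromy action on both sides. I do not try to define $\varphi$ as a morphism of complexes of sheaves on $U$ — the stalkwise description plus monodromy compatibility is what the lemma asks for.

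For local triviality, fix a simply connected open $V \subseteq U$ containing $b$. Over $\varpi^{-1}(V) = U^N \times V$, the subsets $Z_0, \ldots, Z_{N-1}$ are pullbacks from $U^N$, and only $Z_N = \{s_N = s_{N+1}\}$ varies with the last coordinate, but still as a trivial $U^{N-1}$-bundle over $V$. The canonical identifications of $Z_I \cap \varpi^{-1}(b')$ with the $b$-versions $Y_I$ as subsets of $U^N$ therefore give termwise identifications $\aK{N}|_{\varpi^{-1}(b')} \cong \aKb{N}$ continuous in $b'$, so $R^N\varpi_\ast(\aK{N})|_V$ is the constant sheaf with fiber $\mathbb{H}^N(U^N, \aKb{N})$. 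Local triviality of $\mathcal{L}^N$ is even more direct: any $\gamma \in \pi_1(U; a, b)$ is matched with $\gamma\delta \in \pi_1(U; a, b')$ for a short path $\delta : b \to b'$ in $V$, independently of $\delta$ modulo homotopy because $V$ is simply connected.

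For compatibility, take such a $\delta$ and let $(\eta, \omega) = (\eta_I, \omega_I)_I$, $\bdsigma$, $\gamma$ be as in the definition of $\varphi_b$. Parallel transport in $\mathcal{L}^N$ along $\delta$ sends $\bdsigma \otimes \gamma$ to $\widetilde{\bdsigma}_\delta \otimes \gamma\delta$, where $\widetilde{\bdsigma}_\delta$ is the lift of $\bdsigma$ along $\delta$; parallel transport in $R^N\varpi_\ast(\aK{N})$ identifies $(\eta, \omega)$ with its continuation $(\eta', \omega')$ at $b'$ under the trivialization above. The required compatibility reduces, term by term in $I \subseteq I_N$, to showing that the cycle $\sigma^{N, I}_{b', \gamma\delta}$ in the $b'$-fiber of $Z_I$ differs from the parallel transport of $\sigma^{N, I}_{b, \gamma}$ by a chain whose pairing against a cocycle of $\aK{N}$ vanishes by telescoping across the alternating differential $d = \bigoplus_{I \subseteq K} \epsilon(I, K)\, d_{K, I}$. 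The main obstacle is exactly this cycle-level computation: concretely it is a prism decomposition of $\Delta_{N - \# I} \times [0, 1]$ under the natural map into $U^N$, together with a careful sign analysis matching the factor $\epsilon = (-1)^{\binom{N - \# I}{2}}(-1)^{N \# I^c}\epsilon(I^c)$. Once this bookkeeping is in place, the verification is mechanical and parallels Beilinson's original argument for the constant-coefficient case cited in the remark preceding this subsection; no conceptually new idea is needed beyond this.
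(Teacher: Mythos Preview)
Your proposal is correct and follows essentially the same approach as the paper: both arguments show that the stalkwise maps $\varphi_b$ glue to a morphism of local systems by checking, for $b,b'$ in a contractible $V\subseteq U$, the compatibility $\varphi_b(\eta,\omega)(\bdsigma\otimes\gamma)=\varphi_{b'}(\eta,\omega)(\bdsigma\otimes\gamma\gamma')$ for a path $\gamma'$ from $b$ to $b'$. The paper simply asserts this identity in one line, whereas you spell out both the local triviality of $R^N\varpi_\ast(\aK{N})$ and the prism-decomposition mechanism underlying the equation; your extra detail is sound but not a different route.
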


\begin{proof}
    We have to show that each maps $\varphi_b$ grue together to a morphism of local systems, that is, for a contractible open subset $V\subseteq U$ and two points $b,b'\in V$, the diagram
    \begin{align}
        \xymatrix {
            \mathbb{H}^N(U^N,\aKb{N}) \ar[d]^{\varphi_b} & R^N\varpi_\ast(\aK{N})(V) \ar[l]_{\sim} \ar[r]^{\sim} & \mathbb{H}^N(U^N,\aKbp{N}) \ar[d]_{\varphi_{b'}}\\
            (\mathcal{L}^N)_b & \mathcal{L}^N(V) \ar[l]_{\sim} \ar[r]^{\sim} & (\mathcal{L}^N)_{b'}
        }
    \end{align}
    is commutative. That follows from
    \begin{align}
        \varphi_b(\eta,\omega)(\bdsigma\otimes\gamma)=\varphi_{b'}(\eta,\omega)(\bdsigma\otimes\gamma\gamma')
    \end{align}
    for any path $\gamma'$ from $b$ to $b'$.
\end{proof}

Then, it is enough to show that $\varphi$ is isomorphic. We will show it by induction on $N$.

\begin{lem} \label{lem:about_varphi}
    Under the assumption of the induction hypothesis, we have the followings.
    \begin{enumerate}[label=(\roman*)]
        \item It holds $R^i\varpi_\ast(\aK{N})=0$ for any $i\le N-1$.
        \item $\varphi$ is isomorphic. \label{item:map_varphi_is_isom}
    \end{enumerate}
\end{lem}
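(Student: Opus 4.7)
The plan is to prove both parts simultaneously by induction on $N$, following the architecture of Beilinson's argument for the classical case, with the extra bookkeeping required to track the $\Sym^{\le M} R^{\bullet}\widetilde{\pi}_\ast\locQ$ factor alongside the locally constant part. The base case is a direct verification: at the lowest level, the complex $\aK{0}$ essentially reduces to the constant sheaf $\locQ$, and $\mathcal{L}^0$ is the trivial local system $\bQ_U$, so the pairing $\varphi$ is just the obvious evaluation.

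For the inductive step, the central device is to apply $R\varpi_\ast$ to the short exact sequences (\ref{eq:exact_seq_iota}) and (\ref{eq:exact_seq_pKb_and_ptKb}), obtaining long exact sequences of higher direct images whose outer terms can be controlled by the induction hypothesis. For the term $(\iota_b)_\ast\aKb{N-1}[-1]$, the map $\iota_b$ is a closed embedding compatible with $\varpi$, so $R^i\varpi_\ast$ of this shifted term reduces to $R^{i-1}\varpi_\ast\aK{N-1}$, which by induction vanishes for $i\le N-1$ and equals $\mathcal{L}^{N-1}$ for $i=N$. For the term $\atK{N-1}$, the exact sequence (\ref{eq:exact_seq_pKb_and_ptKb}) relates it to $\aK{N-1}$ modulo a skyscraper shifted into degree $N$; the skyscraper pushes forward to a sheaf supported at a single point of $U$, contributing only in top degree, while the rest is again controlled by the induction hypothesis. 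Assembling these computations in the long exact sequence collapses all low-degree terms and yields (i): $R^i\varpi_\ast\aK{N}=0$ for $i\le N-1$.

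For (\ref{item:map_varphi_is_isom}), the remaining exact sequence in top degree identifies $R^N\varpi_\ast\aK{N}$ as an extension whose outer pieces match, under $\varphi$, the successive quotients of $\mathcal{L}^N$ with respect to the filtration induced by the powers of the augmentation ideal $J$. That the explicit pairing $\varphi_b$ defined via iterated integrals intertwines these identifications follows from the concatenation formula (\cref{item:path_connection} of \cref{prop:properties_of_iterated_integral}) and the fact that the transported cycles $\sigma_{b,\gamma}^{N,I}$ restrict correctly along each face $Y_I$. The main obstacle I anticipate is not the vanishing (i), which is a formal consequence of the long exact sequence and the induction hypothesis, but rather the identification in (ii): one has to match the sign conventions $\epsilon(I,K)$ with the coproduct structure on $\bQ[\pi_1(U;a,b)]/J^{N+1}$, and check that the $\Sym^{\le M}$ factor on the de Rham side pairs correctly with the corresponding symmetric tensor factor on the Betti side in a manner compatible with the Gauss--Manin connection. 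Fortunately, this last compatibility is already absorbed into the Riemann--Hilbert identification used in \cref{lem:deRham_to_Betti}, so the genuinely new work concentrates on verifying the combinatorial matching of signs, filtrations, and the specific formula defining $\varphi_b$.
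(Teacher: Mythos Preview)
Your overall architecture---induct on $N$, use the two short exact sequences (\ref{eq:exact_seq_iota}) and (\ref{eq:exact_seq_pKb_and_ptKb}), and deduce both statements via a five-lemma argument---matches the paper's. But two of your intermediate claims are wrong or misleading, and they hide exactly where the real work lies.

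First, your identification of $R^i\varpi_\ast\bigl((\iota_b)_\ast\aKb{N-1}[-1]\bigr)$ with $R^{i-1}\varpi_\ast\aK{N-1}$ is incorrect. The map $\iota_b$ embeds $U^{N-1}$ into the single fiber $\varpi^{-1}(b)\subset U^{N-1,1}$; it is \emph{not} compatible with $\varpi$ in the way you suggest. Consequently $R^i\varpi_\ast(\iota_b)_\ast\aKb{N-1}$ is a skyscraper at $b$ with stalk $\mathbb{H}^i(U^{N-1},\aKb{N-1})$, not the local system $R^i\varpi_\ast\aK{N-1}$. The paper does not push (\ref{eq:exact_seq_iota}) forward along $\varpi$ directly; rather it takes hypercohomology on $U^N$ to obtain a long exact sequence of groups, and separately runs the Leray spectral sequence for $\varpi$ on $\atK{N-1}$ to control the middle terms.

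Second, and more seriously, your assertion that (i) ``is a formal consequence of the long exact sequence and the induction hypothesis'' is false. After the Leray spectral sequence collapses in low degrees, one still has $\mathbb{H}^{N-1}(U^{N-1,1},\atK{N-1})\cong H^0(U,R^{N-1}\varpi_\ast(\atK{N-1}))$, and this $H^0$ does \emph{not} vanish for purely formal reasons. The paper shows it vanishes by proving that a certain restriction map $g$ (coming from the skyscraper sequence (\ref{eq:exact_seq_pKb_and_ptKb})) is injective, which in turn requires the group-cohomology computation that the $\pi_1(U;a)$-invariants of $\bQ[\pi_1(U;a)]^\vee$ are exactly $\bQ$. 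This is the genuine non-formal input you are missing. Likewise, for (ii) the substantive step is not sign-matching but identifying $\Ker f$ with $H^1(U,R^{N-1}\varpi_\ast(\aK{N-1}))$; the paper does this by showing $H^0(U,\Coker\psi)=0$, which requires a separate argument in each of the cases $a\ne b$ and $a=b$.
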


\begin{proof}

    Since the exact sequence (\ref{eq:exact_seq_iota}), we have a long exact sequence

    \hspace{-5.7cm}
    \begin{tikzpicture}

        \node (xshift) at (4,0) {};
        \node (yshift) at (0,1.2) {};

        \node (0) at ($1*(xshift)+2*(yshift)$) {$\cdots$};
        \node (1) at ($2*(xshift)+2*(yshift)$) {$\mathbb{H}^{N-1}(U^{N-1,1},\atK{N-1})$};
        \node (2) at ($0*(xshift)+1*(yshift)$) {$\mathbb{H}^{N-1}(U^{N-1},\aKb{N-1})$};
        \node (3) at ($1*(xshift)+1*(yshift)$) {$\mathbb{H}^{N}(U^N,\aKb{N})$};
        \node (4) at ($2*(xshift)+1*(yshift)$) {$\mathbb{H}^{N}(U^{N-1,1},\atK{N-1})$};
        \node (5) at ($0*(xshift)+0*(yshift)$) {$\mathbb{H}^{N}(U^{N-1},\aKb{N-1})$};
        \node (6) at ($1*(xshift)+0*(yshift)$) {$\cdots$};
        
        \draw[->] ($(0)+(+1.3,0)$) to (1);
        \draw[->] (2) to (3);
        \draw[->] (3) to (4);
        \draw[->] (5) to ($(6)+(-1.5,0)$);

        \draw (1) to[
            out=0,
            in=180,
            looseness=1.35
        ] (2);
        \draw (4) to[
            out=0,
            in=180,
            looseness=1.35
        ] node[below right] {$f$} (5);

        \useasboundingbox
            ([xshift=+5cm]2.west)
            rectangle
            ([xshift=-5cm]4.east);

    \end{tikzpicture}

    Then, if we put $V:=\Sym^{\le M} H_1(E_b,\bQ)$, we deduce the following diagram
    \begin{align} \label{eq:diagram_of_kerf}
        \xymatrix {
            & 0 \ar[d]\\
            \mathbb{H}^{N-1}(U^{N-1},\aKb{N-1}) \ar[d] \ar[r]_{\varphi_b}^{\sim} & (\mathcal{L}^{N-1})_b \ar[d]\\
            \mathbb{H}^{N}(U^N,\aKb{N}) \ar[d] \ar[r]_{\varphi_b} & (\mathcal{L}^N)_b \ar[d]\\
            \Ker f \ar[d] \ar@{-->}[r] & \left(V^{\otimes \le N-1}\otimes_\bQ (J^N/ J^{N+1}) \oplus V^{\otimes N}\otimes_\bQ \bQ[\pi_1(U;a,b)]/J^{N+1}\right)^\vee \ar[d]\\
            0 & 0
        }
    \end{align}
    in which the first and second columns are exact. Here, $J^N\bQ[\pi_1(U;a,b)]$ and $J^{N+1}\bQ[\pi_1(U;a,b)]$ were abbreviated as $J^N$ and $J^{N+1}$, respectively. Since the above square is commutative, the map
    \begin{align} \label{eq:map_from_kerf}
        \varphi_b: \Ker f \to \left((\Sym^{\le M} R_1\pi_\ast\locQ_U)^{\otimes \le N}\otimes_\bQ (J^N/ J^{N+1})\right)^\vee
    \end{align}
    is obtained which makes the below square commutative.


    \begin{sublem} \label{sublem:about_map_from_kerf}
        Under the assumption of the induction hypothesis, we have the followings
        \begin{enumerate}[label=(\roman*)]
            \item It holds $\mathbb{H}^i(U^{N-1,1},\atK{N-1})=0$ for any $i\le N-1$. \label{item:sublem_hypercohomology_vanish}
            \item The map (\ref{eq:map_from_kerf}) is isomorphic. \label{item:sublem_map_from_kerf_is_isom}
        \end{enumerate}
    \end{sublem}

    If \cref{sublem:about_map_from_kerf} is proved, then \cref{lem:about_varphi} is valid due to five lemma for the diagram (\ref{eq:diagram_of_kerf}). Hence, we prove \cref{sublem:about_map_from_kerf} below.

    We first prove \ref{item:sublem_hypercohomology_vanish} of \cref{sublem:about_map_from_kerf} by induction on $i$. Let $E_r^{p,q}$ be the Leray spectral sequence associated with $\varpi$, that is
    \begin{align}
        E^{p,q}_2=H^p(U,R^q\varpi_\ast(\atK{N-1})) \Longrightarrow \mathbb{H}^{p+q}(U^{N-1,1},\atK{N-1}).
    \end{align}

    From the exact sequence (\ref{eq:exact_seq_pKb_and_ptKb}) and the induction hypothesis, we have
    \begin{align}
        R^i\varpi_\ast(\atKb{N-1}) \cong R^i\varpi_\ast(\aKb{N-1})=0
    \end{align}
    for $i\le N-2$ and an exact sequence
    \begin{align} \label{eq:exact_seq_skyscraper}
        0\to R^{N-1}\varpi_\ast(\atK{N-1})\to R^{N-1}\varpi_\ast(\aK{N-1}) \to \underline{(\Sym^{\le M}H_1(E_a,\bQ))^{\otimes\le N-1}}_a \to 0.
    \end{align}

    Therefore, the second page of the spectral sequence $E_2^{p,q}$ vanishes if $q\le N-2$, and from this, we have an equality
    \begin{align}
        \mathbb{H}^i(S^{N-1,1},\atK{N-1})=\begin{cases}
            0, & i\le N-2,\\
            H^0(S,R^{N-1}\varpi_\ast(\atK{N-1})), & i=N-1
        \end{cases}
    \end{align}
    and an exact sequence
    \begin{align} \label{eq:exact_seq_from_spec_seq_of_varpi}
        0\to H^1(U,R^{N-1}\varpi_\ast(\atK{N-1}))\to \mathbb{H}^N(S^{N-1,1}\atK{N-1})\to H^0(U,R^N\varpi_\ast(\atK{N-1}))\to0.
    \end{align}

    It remains to show $H^0(U,R^{N-1}\varpi_\ast(\atK{N-1}))=0$ to prove \ref{item:sublem_hypercohomology_vanish} of \cref{sublem:about_map_from_kerf}. By taking the cohomologies of the exact sequence (\ref{eq:exact_seq_skyscraper}), we deduce
    \begin{align} \label{eq:exact_seq_from_exact_seq_skyscraper}
        0&\to H^0(U,R^{N-1}\varpi_\ast(\atK{N-1})) \to H^0(U,R^{N-1}\varpi_\ast(\aK{N-1})) \overset{g}{\longrightarrow} (\Sym^{\le M}H_1(E_a,\bQ))^{\otimes\le N-1}\\
        &\to H^1(U,R^{N-1}\varpi_\ast(\atK{N-1})) \to H^1(U,R^{N-1}\varpi_\ast(\aK{N-1})) \to 0.
    \end{align}

    To show $H^0(U,R^{N-1}\varpi_\ast(\atK{N-1}))=0$, we will prove $g$ in the above exact sequence is injective. By the induction hypothesis, it holds $$(R^{N-1}\varpi_\ast(\aK{N-1}))_a\cong (\mathcal{L}^{N-1})_a=\left((\Sym^{\le M}H_1(E_a,\bQ))^{\otimes\le N-1}\otimes_\bQ\bQ[\pi_1(U;a)]/J^N\right)^\vee$$ and then, we have
    \begin{align} \label{eq:cohomology_of_local_system_and_group_cohomology}
        H^i(U,R^{N-1}\varpi_\ast(\aK{N-1}))\cong H^i\left(\pi_1(U;a),\left((\Sym^{\le M}H_1(E_a,\bQ))^{\otimes\le N-1}\otimes_\bQ\bQ[\pi_1(U;a)]/J^N\right)^\vee\right)
    \end{align}
    for $i=0,1$. Also, from the short exact sequence
    \begin{align}
        0&\to \left((\Sym^{\le M}H_1(E_a,\bQ))^{\otimes\le N-1}\otimes_\bQ\bQ[\pi_1(U;a)]/J^N\right)^\vee\\
        &\to \left((\Sym^{\le M}H_1(E_a,\bQ))^{\otimes\le N-1}\otimes_\bQ\bQ[\pi_1(U;a)]\right)^\vee\\
        &\to \left((\Sym^{\le M}H_1(E_a,\bQ))^{\otimes\le N-1}\otimes_\bQ J^N\right)^\vee \to 0,
    \end{align}
    we have a long exact sequence
    \begin{align} \label{eq:exact_seq_group_cohomology}
        0 &\longrightarrow H^0\left(\pi_1(U;a),\left((\Sym^{\le M}H_1(E_a,\bQ))^{\otimes\le N-1}\otimes_\bQ\bQ[\pi_1(U;a)]/J^N\right)^\vee\right)\\
        &\longrightarrow H^0\left(\pi_1(U;a),\left((\Sym^{\le M}H_1(E_a,\bQ))^{\otimes\le N-1}\otimes_\bQ\bQ[\pi_1(U;a)]\right)^\vee\right)\\
        &\overset{h}{\longrightarrow} H^0\left(\pi_1(U;a),\left((\Sym^{\le M}H_1(E_a,\bQ))^{\otimes\le N-1}\otimes_\bQ J^N\right)^\vee\right)\\
        &\longrightarrow H^1\left(\pi_1(U;a),\left((\Sym^{\le M}H_1(E_a,\bQ))^{\otimes\le N-1}\otimes_\bQ\bQ[\pi_1(U;a)]/J^N\right)^\vee\right) \to 0.
    \end{align}

    Since the fundamental group $\pi_1(U;a)$ acts diagonally and the invariant part of the action $\pi_1(U;a)\curvearrowright \bQ[\pi_1(U;a)]^\vee$ is $\bQ$, we have
    \begin{align}
        H^0\left(\pi_1(U;a),\left((\Sym^{\le M}H_1(E_a,\bQ))^{\otimes\le N-1}\otimes_\bQ\bQ[\pi_1(U;a)]\right)^\vee\right)\subseteq (\Sym^{\le M}H_1(E_a,\bQ))^{\otimes\le N-1}.
    \end{align}

    Then, It was shown that $g$ is injective through the isomorphism (\ref{eq:cohomology_of_local_system_and_group_cohomology}) and we have
    \begin{align}
        H^0(S,R^{N-1}\varpi_\ast(\atK{N-1}))=0,
    \end{align}
    which implies \ref{item:sublem_hypercohomology_vanish} of \cref{sublem:about_map_from_kerf}.

    We now prove \ref{item:sublem_map_from_kerf_is_isom} of \cref{sublem:about_map_from_kerf}. By computing the invariant part of the $\pi_1(U;a)$-action, it holds
    \begin{align}
        &H^0\left(\pi_1(U;a),\left((\Sym^{\le M}H_1(E_a,\bQ))^{\otimes\le N-1}\otimes_\bQ J^N\right)^\vee\right)\\
        &\cong \left((\Sym^{\le M}H_1(E_a,\bQ))^{\otimes\le N-1}\otimes_\bQ J^N/J^{N+1} \oplus (\Sym^{\le M}H_1(E_a,\bQ))^{\otimes N}\otimes_\bQ \bQ[\pi_1(U;a)]/J^{N+1} \right)^\vee.
    \end{align}
    Then, from the \cref{eq:cohomology_of_local_system_and_group_cohomology}, and the map $h$ in the exact sequence (\ref{eq:exact_seq_group_cohomology}) is zero map, we deduce
    \begin{align}
        &H^1(U,R^{N-1}\varpi_\ast(\aK{N-1}))\\
        &\cong \left((\Sym^{\le M}H_1(E_a,\bQ))^{\otimes\le N-1}\otimes_\bQ J^N/J^{N+1} \oplus (\Sym^{\le M}H_1(E_a,\bQ))^{\otimes N}\otimes_\bQ \bQ[\pi_1(U;a)]/J^{N+1} \right)^\vee.
    \end{align}
    Combining the above discussion with the diagram (\ref{eq:diagram_of_kerf}), it is sufficient to show
    \begin{align}
        \Ker f= H^1(U,R^{N-1}\varpi_\ast(\aK{N-1}))
    \end{align}
    to prove \ref{item:sublem_map_from_kerf_is_isom}. For this, we consider the long exact sequence
    \begin{align}
        \cdots\to R^{N-1}\varpi_\ast(\iota_b)_\ast(\aKb{N-1})\overset{\psi}{\longrightarrow} R^N\varpi_\ast(\aKb{N})\to R^N\varpi_\ast(\atK{N-1})\to\cdots
    \end{align}
    deduced from the exact sequence (\ref{eq:exact_seq_iota}). On the other hands, by considering the Leray spectral sequence associated with $\varpi\circ \iota_b$, we obtain an exact sequence
    \begin{align} \label{eq:exact_seq_from_spec_seq_of_varpi_iota}
        0\to H^1(U,R^{N-1}\varpi_\ast(\iota_b)_\ast(\aKb{N-1}))&\to \mathbb{H}^N(U^{N-1},\aKb{n-1})\\
        &\to H^0(U,R^N\varpi_\ast(\iota_b)_\ast(\aKb{N-1}))\to0
    \end{align}
    as same as the exact sequence (\ref{eq:exact_seq_from_spec_seq_of_varpi}). Since $R^{N-1}\varpi_\ast(\iota_b)_\ast(\aKb{N-1})$ is a skyscraper sheaf, it hold
    \begin{align}
        H^1(U,R^{N-1}\varpi_\ast(\iota_b)_\ast(\aKb{N-1}))=0
    \end{align}
    and the right map of the above exact sequence.

    Let $\mathcal{F}:=\Coker\psi$ be a sheaf on $U$ and consider the commutative diagram
    \begin{align} \label{eq:diagram_from_spec_seq}
        \xymatrix {
            & 0 \ar[d] & \\
            & H^1(U,R^{N-1}\varpi_\ast(\atK{N-1})) \ar[d] \ar[r] & 0 \ar[d]\\
            & \mathbb{H}^{N}(U^{N-1,1},\aK{N-1}) \ar[d] \ar[r]^{f} & \mathbb{H}^N(U^{N-1},\aKb{N-1}) \ar[d]\\
            H^0(U,\mathcal{F}) \ar[r] & H^0(U,R^N\varpi_\ast(\atK{N-1})) \ar[d] \ar[r] & H^0(U,R^N\varpi_\ast(\iota_y)_\ast(\aKb{N-1})) \ar[d]\\
            & 0 & 0
        },
    \end{align}
    whose first and second columns are the exact sequences (\ref{eq:exact_seq_from_spec_seq_of_varpi}) and \cref{eq:exact_seq_from_spec_seq_of_varpi_iota}, respectively. The above square implies
    \begin{align}
        \Ker f\supseteq H^1(U,R^{N-1}\varpi_\ast(\aK{N-1}))
    \end{align}
    and the equation
    \begin{align}
        \Ker f= H^1(U,R^{N-1}\varpi_\ast(\aK{N-1}))
    \end{align}
    is derived from $H^0(U,\mathcal{F})=0$ and a diagram chase for the above diagram. $H^0(U,\mathcal{F})=0$ is proved as follows for the cases $a\ne b$ and $a=b$, respectively.

    \begin{enumerate}
        \item (Case $a\ne b$.)
            Set $V:=S^{N-1,1}\setminus\varpi^{-1}(b)$. Then, from the exact sequence (\ref{eq:exact_seq_iota}), it holds
            \begin{align}
                \aKb{N}|_V\cong \atK{N-1}|_V.
            \end{align}
            Also, since the map
            \begin{align}
                R^{N-1}\varpi_\ast(\aK{N-1}|_V) \to R^N\varpi_\ast\underline{(\Sym^{\le M}H_1(E_a,\bQ))^{\otimes\le N-1}}_{(a,\dots,a)}[-N]
            \end{align}
            in the long exact sequence
            \begin{align}
                &R^{N-1}\varpi_\ast(\aK{N-1}|_V) \to R^N\varpi_\ast\underline{(\Sym^{\le M}H_1(E_a,\bQ))^{\otimes\le N-1}}_{(a,\dots,a)}[-N]\\
                &\to R^{N}\varpi_\ast(\aK{N}|_V) \to R^{N}\varpi_\ast(\aK{N-1}|_V) \to 0
            \end{align}
            is surjective, it holds
            \begin{align}
                R^{N}\varpi_\ast(\aK{N})|_{U\setminus\{b\}} \to R^{N}\varpi_\ast(\aK{N-1})|_{U\setminus\{b\}}.
            \end{align}
            Thus, for any contractible open subset $W\subseteq V$ with $a\notin W$ and $b\in W$, we have
            \begin{align}
                \mathbb{H}^N(\varpi^{-1}(W),\aK{N-1})\cong \mathbb{H}^N(\varpi^{-1}(W),\aKb{N-1})
            \end{align}
            and
            \begin{align}
                R^N\varpi_\ast(\aKb{N-1})_b=0,
            \end{align}
            which implies $\mathcal{F}=0$.

        \item (Case $a=b$.)
            Set $V:=S^{N-1,1}\setminus\varpi^{-1}(b)$. Then, from the exact sequence (\ref{eq:exact_seq_iota}), it holds
            \begin{align}
                \aKa{N}|_V\cong \atK{N-1}|_V.
            \end{align}
            Since $\mathcal{F}|_{U\setminus\{a\}}=R^N\varpi_\ast(\aK{N})|_{U\setminus\{a\}}$ is a local system, we have
            \begin{align}
                \mathbb{H}^N(\varpi^{-1}(W),\aKa{N})=\mathbb{H}^0(\varpi^{-1}(W),\underline{(\Sym^{\le M}H_1(E_a,\bQ))^{\otimes\le N-1}}_{(a,\dots,a)})
            \end{align}
            and the stalk of $R^N\varpi_\ast(\aK{N})$ at $a$ is
            \begin{align}
                R^N\varpi_\ast(\aK{N})_a=(\Sym^{\le M}H_1(E_a,\bQ))^{\otimes\le N-1}.
            \end{align}
            In that case, the map
            \begin{align}
                \psi_a: R^{N-1}\varpi_\ast(\iota_b)_\ast(\aKb{N-1})_a \to R^N\varpi_\ast(\aKb{N})_a
            \end{align}
            between stalks is surjective, which implies $\mathcal{F}=0$.
    \end{enumerate}

    Therefore, in each cases $a\ne b$ and $a=b$, it holds $\mathcal{F}=0$ and we deduce \ref{item:sublem_map_from_kerf_is_isom} of \cref{sublem:about_map_from_kerf} and also \cref{lem:about_varphi}.

\end{proof}


\subsection{Tangential base point and regularized iterated integral} \label{subsec:regularized_iterated_integral}

We can extend \cref{thm:pi1-deRham_family} to the case of regularized iterated integrals.

\begin{thm}[\cref{thm:pi1-deRham_family_reg}] \label{thm:pi1-deRham_family_reg_again}
    For each $M, N\in\bZge{0}$ and two tangential base points $\mathbf{a}, \mathbf{b}$ on $S$, regularized iterated integral induces an isomorphism
    \begin{align}
        &L^{-N}\mathbb{H}^0(S, B^\bullet(\Sym^{\le M} \mathcal{H}^1(\cE/S)\otimes_{\mathcal{O}_S}\Omega^\bullet_S))\\
        &\cong
        \left((\Sym^{\le M} H_1(E_\mathbf{b},\bQ))^{\otimes \le N}\otimes_\bQ\bQ[\pi_1(S;\mathbf{a},\mathbf{b})]/J^{N+1}\bQ[\pi_1(S;\mathbf{a},\mathbf{b})]\otimes_\bQ\bC\right)^\vee,
    \end{align}
    where $J\subseteq \mathbb{Q}[\pi_1(S;\mathbf{a})]$ is the augmentation ideal.
\end{thm}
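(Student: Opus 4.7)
The plan is to reduce \cref{thm:pi1-deRham_family_reg_again} to the already-proven \cref{thm:pi1-deRham_family_again} via a ``change of base point'' along cuspidal tails. For each tangential base point $\mathbf{a}=(a,u)$, I would choose a cuspidal path $\ell_\mathbf{a}:[0,1]\to S$ starting at $\mathbf{a}$ and ending at some interior point $a_0\in S$, and similarly a path $\ell_\mathbf{b}$ ending at $b_0\in S$. The concatenation
\[
\Phi:\pi_1(S;\mathbf{a},\mathbf{b})\overset{\sim}{\longrightarrow}\pi_1(S;a_0,b_0),\qquad \gamma\longmapsto \ell_\mathbf{a}^{-1}\gamma\ell_\mathbf{b},
\]
is $\pi_1(S;\mathbf{a})$-equivariant after the natural identification $\pi_1(S;\mathbf{a})\cong\pi_1(S;a_0)$, and it carries the augmentation ideal to the augmentation ideal. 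Parallel transport along $\ell_\mathbf{a}$ and $\ell_\mathbf{b}$ via the Gauss--Manin connection identifies the limit Betti fibers $H_1(E_\mathbf{a},\bQ)\cong H_1(E_{a_0},\bQ)$ and $H_1(E_\mathbf{b},\bQ)\cong H_1(E_{b_0},\bQ)$. Hence the right-hand side of \cref{thm:pi1-deRham_family_reg_again} is canonically identified with the right-hand side of \cref{thm:pi1-deRham_family_again} applied to the interior base points $(a_0,b_0)$.

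Next, I would check that the two pairings match under this identification. Applying \cref{item:connection_for_reg_iterated_integral} of \cref{prop:properties_of_regularized_iterated_integral} to the decomposition $\ell_\mathbf{a}^{-1}\gamma\ell_\mathbf{b}$, the regularized pairing $\langle w,\bdsigma\otimes\gamma\rangle_{\mathrm{reg}}$ expands as a sum of products of three factors: regularized boundary integrals along $\ell_\mathbf{a}^{-1}$ and $\ell_\mathbf{b}$, and the ordinary iterated integral along the interior path $\Phi(\gamma)$. The first and last factors depend only on $w$, not on $\gamma$, and I would show that they assemble into an invertible, unipotent linear endomorphism of the bar-cohomology side that is compatible with the shuffle/coproduct/antipode structure and with the length filtration $L^{-N}$. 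Composing this change of basis with \cref{thm:pi1-deRham_family_again} at $(a_0,b_0)$ then yields the isomorphism of \cref{thm:pi1-deRham_family_reg_again}.

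Well-definedness of the regularized pairing as a homomorphism into the dual of $\bQ[\pi_1(S;\mathbf{a},\mathbf{b})]/J^{N+1}\bQ[\pi_1(S;\mathbf{a},\mathbf{b})]$ is established by repeating verbatim the inductive argument used for the unregularized pairing, now invoking \cref{prop:properties_of_regularized_iterated_integral} in place of \cref{prop:properties_of_iterated_integral}. Every step of that argument --- concatenation, shuffle, inversion --- has a regularized counterpart with identical algebraic form, so the induction on $N$ goes through unchanged.

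The main obstacle is the asymptotic analysis hidden in the second step. One must verify that the boundary contributions from the cuspidal tails $\ell_\mathbf{a}$ and $\ell_\mathbf{b}$ really assemble into a \emph{unipotent} base change, rather than inducing any genuine collapse of degrees of freedom that would prevent invertibility. Since the relevant periods $f_{i,j}(\lambda)=\int_{\widetilde{\sigma}_{i,j}}\eta_{i,j}$ grow only logarithmically as $\lambda$ tends to a cusp in $\{0,1,\infty\}$ --- a consequence of the Picard--Fuchs analysis of the Gauss--Manin connection on the Legendre family, visible already in the hypergeometric expansions of $\widetilde{K}(k),\widetilde{E}(k)$ --- this unipotency is plausible. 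Still, its proof requires a careful bookkeeping of logarithmic asymptotic developments, combined with the shuffle relations of \cref{prop:properties_of_regularized_iterated_integral}, to ensure the boundary endomorphism is triangular with $1$'s on the diagonal when written in a length-graded basis.
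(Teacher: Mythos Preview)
Your reduction to the interior case via cuspidal tails is exactly the idea the paper uses, and your argument is correct. The paper's execution is shorter, however: rather than assembling the boundary contributions into a unipotent endomorphism and inverting it, the paper simply checks non-degeneracy on the Betti side. Given a nonzero $\bdsigma\otimes\gamma$ with $\langle w,\bdsigma\otimes\gamma\rangle=0$ for all $w$, the concatenation formula \cref{item:connection_for_reg_iterated_integral} immediately yields $\langle w,\bdsigma\otimes\gamma_1\gamma\gamma_2\rangle=0$ for all $w$ (since the middle factor $\langle w_2,\bdsigma_2\otimes\gamma\rangle$ vanishes in every term of the sum), and then \cref{thm:pi1-deRham_family_again} at the interior base points $(a',b')$ forces $\bdsigma\otimes\gamma_1\gamma\gamma_2=0$, hence $\bdsigma\otimes\gamma=0$. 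Since the two sides have the same (finite) dimension --- the left side is independent of the base points, and conjugation by the tails identifies the right side with its interior counterpart --- this non-degeneracy suffices.

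The ``obstacle'' you flag is therefore illusory: no asymptotic analysis of the boundary terms is needed, because the paper's argument never inverts them. Even in your formulation the unipotency is purely algebraic --- in the concatenation sum the term with $w_1=w_3=\emptyset$ contributes $\langle w,\bdsigma\otimes\gamma\rangle$ with coefficient~$1$, and all other terms strictly lower the length of the middle factor --- so the logarithmic growth of $\widetilde{K},\widetilde{E}$ plays no role beyond guaranteeing that the regularized integrals exist.
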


\begin{proof}
    The well-definedness of the pairing follows from the well-definedness of regularized iterated integral and \cref{prop:properties_of_regularized_iterated_integral}. So, we prove that the pairing is non-degenerate, that is, there is no non-zero element $\bdsigma\otimes\gamma\in (\Sym^{\le M} H_1(E_\mathbf{b},\bQ))^{\otimes \le N}\otimes_\bQ\bQ[\pi_1(S;\mathbf{a},\mathbf{b})]/J^{N+1}\bQ[\pi_1(S;\mathbf{a},\mathbf{b})]$ such that $\langle w,\bdsigma\otimes\gamma\rangle$ for all $w\in L^{-N}\mathbb{H}^0(S, B^\bullet(\Sym^{\le M} \mathcal{H}^1(\cE/S)\otimes_{\mathcal{O}_S}\Omega^\bullet_S))$. Assume that such a $\bdsigma\otimes\gamma\in (\Sym^{\le M} H_1(E_\mathbf{b},\bQ))^{\otimes \le N}\otimes_\bQ\bQ[\pi_1(S;\mathbf{a},\mathbf{b})]/J^{N+1}\bQ[\pi_1(S;\mathbf{a},\mathbf{b})]$ exists. We take $a',b'\in S$ and two paths; $\gamma_1$ from $a'$ to $\mathbf{a}$ and $\gamma_2$ from $\mathbf{b}$ to $b'$. Then, for any $w\in L^{-N}\mathbb{H}^0(S, B^\bullet(\Sym^{\le M} \mathcal{H}^1(\cE/S)\otimes_{\mathcal{O}_S}\Omega^\bullet_S))$, it holds
    \begin{align}
        \langle w,\bdsigma\otimes\gamma_1\gamma\gamma_2\rangle=\sum_{w=w_1w_2w_3}\langle w_1,\bdsigma_1\otimes\gamma_1 \rangle \langle w_2,\bdsigma_2\otimes\gamma \rangle \langle w_3,\bdsigma_3\otimes\gamma_2 \rangle.
    \end{align}
    Here, we separate $\bdsigma=\bdsigma_1\bdsigma_2\bdsigma_3$ so that $\bdsigma_i$ and $w_i$ have the same length. Then, since the length of $\bdsigma_2$ is less than or equal to $N$, we have
    \begin{align}
        \langle w,\bdsigma\otimes\gamma_1\gamma\gamma_2\rangle=0.
    \end{align}
    Note that $\gamma_1\gamma\gamma_2$ is a usual (not a tangential) path. Therefore, according to \cref{thm:pi1-deRham_family}, it holds $\bdsigma\otimes\gamma_1\gamma\gamma_2=0$, which implies $\bdsigma\otimes\gamma=0$. 
\end{proof}

\appendix
\crefalias{section}{appendix}
\section{Cosimplicial object} \label{app:cosimplicial_obj}

In this section, we give a properties of cosimplicial objects, which is used in the proof of the main theorem.

Let $\mathcal{I}$ be the small category whose objects are finite ordered sets $I_n$ ($n\in\bZge{0}$) and whose morphisms are non-decreasing maps $I_n\to I_m$ for any $n,m\in\bZge{0}$. We write the special morphisms; cofaces $\delta^i:I_n\to I_{n+1}$ ($i=0,\dots,n+1$) and codegeneracies $\sigma^i:I_{n+1}\to I_n$ ($i=0,\dots,n$) defined by
\begin{align}
    \delta^i(j):=\begin{cases}
        j, & j<i,\\
        j+1, & j\ge i,
    \end{cases}, &&
    \sigma^i(j):=\begin{cases}
        j, & j\le i,\\
        j-1, & j> i.
    \end{cases}
\end{align}
Note that any morphism in $\mathcal{I}$ can be written as a composition of cofaces and codegeneracies.

\begin{dfn}
    Let $\mathcal{C}$ be a category. A cosimplicial object of $\mathcal{C}$ is a functor $\mathcal{I}\to\mathcal{C}$. We denote $X^\bullet:I_n\rightsquigarrow X^n$ for a cosimplicial object $X^\bullet$. The image of a coface and a codegeneracy under the functor is also called a coface and a codegeneracy, respectively.
\end{dfn}

\begin{ex}
    For a manifold $S$ and two points $a,b\in S$, $\aUb^\bullet$ is a cosimplicial object with
    \begin{itemize}
        \item (components) $\aUb^n:=\underbrace{S\times \cdots\times S}_{n}$.
        \item (coface map) $\delta^i:\aUb^n\to \aUb^{n+1}$ ($i=0,\dots,n$) is given by $$\delta^i(s_1,\dots,s_n):=\begin{cases}
            (a,s_1,\dots,s_n), & i=0,\\
            (s_1,\dots,s_i,s_i,\dots,s_n), & 0<i<n,\\
            (s_1,\dots,s_n,b), & i=n.
        \end{cases}$$
        \item (codegeneracy map) $\sigma^i:\aUb^{n+1}\to \aUb^n$ ($i=0,\dots,n$) is given by $$\sigma^i(s_1,\dots,s_{n+1}):=(s_1,\dots,s_i,s_{i+2},\dots,s_{n+1}).$$
    \end{itemize}
\end{ex}

For given cosimplicial object $X^\bullet$ of an abelian category, we can define some cochain complexes of the abelian category.

\begin{dfn}
    For a cosimplicial object $X^\bullet$ of an abelian category with cofaces $\delta^i:I_n\to I_{n+1}$ and codegeneracies $\sigma^i:I_{n+1}\to I_n$, we define following two cochain complexes.
    \begin{enumerate}
        \item The associated cochain complex $CX^\bullet$ is defined by
            \begin{align}
                CX^n:=X^n, && \partial^n:\sum_{i=0}^{n+1}(-1)^i\delta^i:CX^n\to CX^{n+1}.
            \end{align}
        \item The associated normalized cochain complex $\mathcal{N}X^\bullet \subseteq CX^\bullet$ is defined by
            \begin{align}
                \mathcal{N}X^n:=\bigcap_{i=0}^{n-1}\Ker\sigma^i. 
            \end{align}
    \end{enumerate}
\end{dfn}

To prove the main theorem, we now introduce the other cochain complex $C^\bullet(I_N, X^\bullet)$ for each $N\in\bZge{0}$ and a cosimplicial object $X^\bullet$.

\begin{dfn}
    For each $\emptyset \ne I\subseteq I_N$, we denote $X^I:=X^{\# I}$. For $K=\{i_0,\dots,i_l\}\subseteq I_N$ with $i_0<\cdots<i_l$ and $I=K\setminus\{i_j\}$ ($j=0,\dots,l$), we denote $\epsilon(I,K):=(-1)^j$. We also write
    \begin{align}
        d_{K,I}:=\delta^i:X^I\to X^K.
    \end{align}
    Then, we define the truncated normalized cochain complex $C^\bullet(I_N, X^\bullet)$ is defined by
    \begin{align}
        C^n(I_N,X^\bullet):=\bigoplus_{I\in\binom{I_N}{n+1}}X^I
    \end{align}
    with differential map $d: C^n(I_N,X^\bullet)\to C^{n+1}(I_N,X^\bullet)$ defined by
    \begin{align}
        d:=\bigoplus_{\substack{I\in\binom{I_N}{n}, K\in\binom{I_N}{n+1}\\I\subseteq K}} \epsilon(I,K)d_{K,I}.
    \end{align}
\end{dfn}

We denote by $\sigma_{\le N}$ the b\^{e}te filtration
\begin{align}
    \sigma_{\le N}C^n:=\begin{cases}
        C^n, & n\ge N,\\
        0, & n>N.
    \end{cases}
\end{align}
Then, we have the following lemma.

\begin{lem}[{\cite[Proposition A.238, p.483]{GiFr}}] \label{lem:cosimplicial_vs_normalized}
    Let $X^\ast$ be a cosimplicial object in abelian category and $N\in\bZge{0}$, then $C^\ast(I_N,X^\ast)$ and $\sigma_{\le N} \mathcal{N}X^\ast$ are functorially homotopically equivalent.
\end{lem}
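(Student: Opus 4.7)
The statement is a truncated form of the classical normalization theorem: for any cosimplicial abelian object $X^\bullet$ the unnormalized cochain complex $CX^\bullet$ retracts onto its normalized subcomplex $\mathcal{N}X^\bullet$, and the cited lemma is the finite-range analogue where only the first $N+1$ cosimplicial levels participate, so that the truncated \v{C}ech-type complex $C^\bullet(I_N,X^\bullet)$ plays the role of $CX^\bullet$. My plan is to construct an explicit chain map together with a retraction, and then an explicit chain homotopy, all built componentwise from the coface and codegeneracy operators, so that functoriality in $X^\bullet$ is automatic.

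In one direction, I would define
\[
\Phi:\sigma_{\le N}\mathcal{N}X^\bullet\longrightarrow C^\bullet(I_N,X^\bullet)
\]
by sending $x\in\mathcal{N}X^n$ (for $n\le N$) to the component indexed by the canonical subset $I_0:=\{0,1,\ldots,n\}\subseteq I_N$, via the composition of cofaces identifying $X^n$ with $X^{I_0}$, and sending $x$ to $0$ in every other summand. Compatibility with differentials reduces to the cosimplicial identity $\delta^j\delta^i=\delta^i\delta^{j-1}$ for $i<j$ together with the signs $\epsilon(I,K)$ built into the definition of $d$ on $C^\bullet(I_N,X^\bullet)$, which forces pairwise cancellation of the boundary contributions coming from subsets $K\supsetneq I_0$ with non-consecutive elements. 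In the other direction I would define
\[
\Psi:C^\bullet(I_N,X^\bullet)\longrightarrow \sigma_{\le N}\mathcal{N}X^\bullet
\]
on each summand $X^I$ by first applying iterated codegeneracies to collapse $X^I$ back into the canonical $X^n$, and then composing with the standard projection $X^n\twoheadrightarrow\mathcal{N}X^n$ coming from the splitting $X^n=\mathcal{N}X^n\oplus DX^n$ of the normalized and degenerate parts. That $\Psi\Phi=\mathrm{id}$ is then immediate from $\sigma^i x=0$ for $x\in\mathcal{N}X^\bullet$.

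The main obstacle, as already in the untruncated Dold--Kan normalization theorem, is exhibiting a chain homotopy $h:C^n(I_N,X^\bullet)\to C^{n-1}(I_N,X^\bullet)$ realising $\Phi\Psi-\mathrm{id}\simeq 0$. I would follow the classical ``prism'' construction: on the summand indexed by $I=\{i_0<\cdots<i_n\}\subseteq I_N$, I would take $h$ to be a signed sum of the $\sigma^j$-images, each placed into the summand indexed by $I\setminus\{i_j\}$, with signs arranged so that the telescoping cancellations forced by the cosimplicial identities governing $\sigma^i\delta^j$ produce precisely $\Phi\Psi-\mathrm{id}$ upon computing $dh+hd$. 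The combinatorial verification is exactly the content of \cite[Proposition A.238]{GiFr}, to which the paper appeals; the role of the above sketch is to record the specific pair $(\Phi,\Psi)$ so that the quasi-isomorphism invoked in \cref{lem:normalized_to_cosimplicial} is explicit for the later comparison with $\aKb{N}[N]$ in \cref{lem:cosimplicial_to_K}. Everything in the construction is natural in morphisms of cosimplicial objects, yielding the claimed functorial homotopy equivalence.
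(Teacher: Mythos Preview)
The paper does not give its own proof; the lemma is stated with a bare citation to \cite[Proposition~A.238]{GiFr} and nothing more. Your overall plan---explicit chain maps built from cofaces and codegeneracies, plus a prism-type homotopy, all natural in $X^\bullet$---is the standard approach and is what one finds in the cited reference.

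There is, however, a concrete gap in your construction of $\Phi$. Sending $x\in\mathcal{N}X^n$ to the single summand indexed by $I_0=\{0,\dots,n\}$ and to zero in all other summands is \emph{not} a chain map. Take $N=2$, $n=0$: then $d\Phi(x)$ has a nonzero $\{0,2\}$-component, namely $\epsilon(\{0\},\{0,2\})\,d_{\{0,2\},\{0\}}(x)=-\delta^1 x$, and there is nothing to cancel it---only one codimension-one face of $\{0,2\}$ is contained in the support of $\Phi(x)$, so your appeal to ``pairwise cancellation of the boundary contributions coming from subsets $K\supsetneq I_0$ with non-consecutive elements'' does not hold. The map that \emph{does} commute with the differentials is the diagonal embedding: place $x$ in \emph{every} summand $X^I$ with $\#I=n+1$. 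Then for each $K$ with $\#K=n+2$ all $n+2$ of its codimension-one faces contribute, and the $K$-component of $d\Phi(x)$ is $\sum_{j=0}^{n+1}(-1)^j\delta^j x=\partial x$, which matches $\Phi(\partial x)$. With this correction your $\Psi$ (collapse via codegeneracies, then project to the normalized part) and the prism homotopy $h$ go through, and functoriality follows as you say.
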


\bibliographystyle{plain}
\bibliography{bibs}

\end{document}